\newtheorem{theorem}{Theorem}[section]
\newtheorem{lemma}[theorem]{Lemma}
\newtheorem{prop}[theorem]{Proposition}
\newtheorem{cor}[theorem]{Corollary}
\theoremstyle{definition}
\newtheorem{definition}[theorem]{Definition}
\numberwithin{equation}{section}
\theoremstyle{remark}
\numberwithin{equation}{section}
\newcommand{\ord}[2]{{\text{ord}_{#1}(#2)}}
\newcommand{\lexp}[2]{e^{{#1}\frac{{#2}\pi i}{13}}}
\newcommand{\curt}{\mathcal F}
\newcommand{\ba}{\begin{eqnarray}}
\newcommand{\ea}{\end{eqnarray}}
\newcommand{\C}{{\mathbb{C}}}
\newcommand{\R}{{\mathfrak{R}}}
\newcommand{\FS}{{\mathfrak{S}}}
\newcommand{\N}{{\mathbb{N}}}
\newcommand{\Z}{{\mathbb{Z}}}
\newcommand{\no}{\noindent}
\def\lcm{\textrm{lcm}}
\newcommand{\jac}[2]{{\left(\frac{#1}{#2}\right)}}
\begin{document}

\title[Distinguished Integers]{On the Collection of Integers that Index the Fixed Points of Maps on the Space of Rational Functions}

\author{Curtis D. Bennett}
\address{Department of Mathematics, Loyola Marymount University, Los Angeles, CA 90045}
\email{cbennett@lmu.edu}

\author{Edward Mosteig}
\address{Department of Mathematics, Loyola Marymount University, Los Angeles, CA 90045}
\email{emosteig@lmu.edu}




\begin{abstract}
Given integers $s$ and $t$, define a function $\phi_{s,t}$ on the
space of all formal complex series expansions by $\phi_{s,t} (\sum
a_n x^n) = \sum a_{sn+t} x^n$.  We define an integer $r$ to be
distinguished with respect to $(s,t)$ if $r$ and $s$ are relatively
prime and and $r \mid t (1 + s + \cdots s^{\ord{r}{s}-1})$.   The
vector space consisting of all rational functions whose Taylor
expansions at zero are fixed by $\phi_{s,t}$ was previously
classified by constructing a basis that is partially indexed by
integers
 that are distinguished with respect
to the pair $(s,t)$.  In this paper, we study the properties of the
set of distinguished integers with respect to $(s,t)$.  In
particular, we demonstrate that the set of distinguished integers
with respect to $(s,t)$ can be written as a union of infinitely many
arithmetic progressions. In addition, we construct another
generating set for the collection of rational functions that are
fixed by $\phi_{s,t}$ and discuss the relationship between this
generating set and the basis that was generated previously.
\end{abstract}

\maketitle







\section{Introduction} \label{intro}
\setcounter{equation}{0}

Consider the space $\FS$ of all formal series with complex
coefficients of the form
$$R(x) = \sum_{n=-\infty}^\infty a_n x^n.$$
Let $\R$ denote the space of rational functions with complex
coefficients. The Taylor expansion at $x =0$ of $R \in \R$ can be
written as a Laurent series, i.e., \ba R(x) & = & \sum_{n \gg
-\infty} a_{n}x^{n} \ea \no where $n \gg -\infty$ denotes the fact
that the coefficients vanish for large negative $n$.

 For $s, t \in \Z$, define the map $\phi_{s,t}: \FS
\to \FS$ by \ba \phi_{s,t}(\sum a_nx^n) = \sum a_{sn+t}x^n. \ea

 When $s$ is positive, consider the restriction  $\phi_{s,t}: \R \to \R$.
The fixed points of $\phi_{s,t}$ are described in \cite{bolimomost}
and \cite{mosteig}, but these points are parameterized by sequences
of integers that are not well understood.  The purpose of this paper
is shed some light on the situation.  Before recalling the results
of \cite{mosteig}, we need a few preliminary definitions.

\begin{definition}
An integer $r \ge 2$ is called {\em distinguished} with respect to
the pair $(s,t)$ if $r$ and $s$ are relatively prime and and $$r
\mid \beta_{s,t}(\ord{r}{s})$$ where $$\beta_{s,t}(k) =  t \left(
\frac{s^k -1}{s-1}\right)$$ and $\ord{r}{s}$ represents the smallest
positive integer such that $s^\ord{r}{s} \equiv 1 \mod r$. We denote
the set of integers distinguished with respect to $(s,t)$ by
$\Omega(s,t)$.
\end{definition}

The description of all the fixed points of $\phi_{s,t}$ requires the
notion of {\em cyclotomic cosets}:  given $n, \, r \in \mathbb{N}$
with  $r \ge 1$ such that $r$ and $s$ are relatively prime, \ba
C_{s,r,n} & = & \{ s^{i}n \, \text{ mod } \, r: \; i \in \mathbb{Z}
\} \ea \no is a finite set called the $s$-cyclotomic coset of $n$
mod $r$.  Define $\Lambda_{s,r}$ to be a complete collection of
coset representatives (all chosen to be less than $r$); i.e., for
all $n \in \N$, there exists a unique $n' \in \Lambda_{s,r}$ such
that $C_{s,r,n} = C_{s,r,n'}$.  For $r,n \ge 1$, define \ba
\psi_{s,t,r,n}(x)  =  \sum_{j=1}^{\ord{r}{s}} \phi_{s,t}^{(j)}\left(
\frac{1}{1-\omega_r^nx}\right) =  \sum_{j=1}^{\ord{r}{s}}
\frac{\omega_r^{n{\beta_{s,t}(j)}}}{1 - \omega_r^{ns^j} x}
 , \ea
where  $\omega_s = e^{2\pi i/s}$ and $\phi_{s,t}^{(j)}$ represents
the $j$-th iterate of the function $\phi_{s,t}$.  When $n=0$, the
function  $\psi_{s,t,r,n}$ is defined to be $1/(1-x)$.  We recall
the following result from \cite{mosteig}.

\begin{theorem}\label{edbasis}
Suppose $s\ge 2$ and $1 \le t \le s-2$.  The function $1/(1-x)$
together with the collection of all $\psi_{s,t,r,n}$ where $r$ is
distinguished with respect to $(s,t)$ and $n \in \Lambda_{s,r}$ is
relatively prime to $r$ form a basis for the set of all rational
functions that are fixed under the transformation $\phi_{s,t}$.
\end{theorem}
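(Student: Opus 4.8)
The plan is to verify the three defining properties of a basis in turn: that each listed function lies in the fixed space $\{R \in \R : \phi_{s,t}(R) = R\}$, that the listed functions are linearly independent, and that they span the entire fixed space. The engine for all three is the elementary identity
\[
\phi_{s,t}\!\left(\frac{1}{1-\alpha x}\right) = \frac{\alpha^t}{1-\alpha^s x},
\]
valid whenever $\alpha \neq 0$ (the hypothesis $t \le s-2$ ensures that applying $\phi_{s,t}$ to such a term introduces no negative powers of $x$), together with its iterate $\phi_{s,t}^{(j)}\bigl(1/(1-\alpha x)\bigr) = \alpha^{\beta_{s,t}(j)}/(1-\alpha^{s^j}x)$. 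At the level of coefficients, $\phi_{s,t}^{(j)}$ sends a sequence $(a_n)$ to $(a_{s^j n + \beta_{s,t}(j)})$, so the fixed-point equation is simply $a_{sn+t} = a_n$ for all $n$.

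For the fixed-point property I would compute $\phi_{s,t}(\psi_{s,t,r,n})$ by reindexing the defining sum: $\phi_{s,t}(\psi_{s,t,r,n}) - \psi_{s,t,r,n}$ telescopes to $\bigl(\phi_{s,t}^{(k+1)} - \phi_{s,t}^{(1)}\bigr)$ applied to $1/(1-\omega_r^n x)$, where $k = \ord{r}{s}$. Since $s^k \equiv 1 \pmod r$, the two surviving terms share a pole, and using $\beta_{s,t}(k+1) = t + s\,\beta_{s,t}(k)$ their numerators agree precisely when $n\,\beta_{s,t}(k) \equiv 0 \pmod r$; for $n$ coprime to $r$ this is exactly the assertion that $r$ is distinguished (and $1/(1-x)$ is fixed by direct inspection). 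Linear independence is then immediate from partial fractions: when $\gcd(n,r)=1$ the poles of $\psi_{s,t,r,n}$ are precisely the primitive $r$-th roots of unity in the cyclotomic coset $C_{s,r,n}$, so distinct admissible pairs $(r,n)$, together with the pole $x=1$ of $1/(1-x)$, contribute pairwise disjoint nonempty sets of simple poles, each with nonzero residue.

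The substantive part is spanning, which I would carry out by pinning down the pole structure of an arbitrary fixed $R$. First, the inequality $sn+t < n$ for $n \le -1$ (this is where the bound $t \le s-2$ is sharp) lets one iterate $a_{sn+t}=a_n$ toward $-\infty$, showing $R$ has no negative Taylor coefficients; a companion degree count shows $\phi_{s,t}$ strictly lowers the degree of any nonzero polynomial part, so that part vanishes and $R$ is a proper rational function analytic at $0$. Next, since $\phi_{s,t}$ carries a pole of order $m$ at $1/\alpha$ to a pole of order $m$ at $1/\alpha^s$, finiteness of the pole set forces every $\alpha$ to be a root of unity; comparing leading coefficients of the order-$m$ polynomial factor around a $\sigma$-orbit, with $\sigma(\alpha)=\alpha^s$, produces a scaling by $\alpha^{\beta_{s,t}(k)}s^{k(m-1)}$, of modulus $s^{k(m-1)}>1$ unless $m=1$, which rules out higher-order poles. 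A \emph{maximal $p$-part} argument on the support of $R$ then eliminates roots of unity whose order shares a prime factor with $s$.

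Once $R = \sum_\beta c_\beta/(1-\beta x)$ is known to be supported on roots of unity of order coprime to $s$, the map $\sigma$ acts as a bijection partitioning this support into cyclotomic cosets, and matching residues in $\phi_{s,t}(R)=R$ yields the one-step recurrence $c_{\sigma(\beta)} = \beta^t c_\beta$. Traversing a coset of size $k=\ord{r}{s}$ forces the consistency condition $\omega_r^{\,n\beta_{s,t}(k)}=1$ whenever that coset carries a nonzero coefficient, i.e.\ $r$ must be distinguished, after which the coefficients on the coset are determined up to a single scalar — exactly the one-parameter family spanned by $\psi_{s,t,r,n}$. Summing over cosets, with the trivial coset $\{1\}$ contributing $1/(1-x)$, expresses $R$ in the proposed basis. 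I expect the main obstacle to be this pole-structure analysis: making rigorous the uniqueness of the polynomial-plus-proper and per-coset decompositions under the non-diagonal action of $\phi_{s,t}$, and cleanly excluding both higher-order poles and poles at roots of unity of non-coprime order.
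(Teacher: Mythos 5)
Your proposal is sound: the telescoping computation for fixedness, the partial-fractions argument for independence, and the spanning analysis (no negative coefficients and no polynomial part via $sn+t<n$ for $n\le -1$, simple poles via the $s^{k(m-1)}$ scaling, poles of order prime to $s$ via the maximal $p$-part argument, then the residue recurrence $c_{\beta^s}=\beta^t c_\beta$ around each cyclotomic coset, whose consistency condition $\omega_r^{n\beta_{s,t}(k)}=1$ is exactly distinguishedness when $\gcd(n,r)=1$) all check out, with only the harmless imprecision that the poles of $\psi_{s,t,r,n}$ sit at the inverses $\omega_r^{-ns^j}$, i.e.\ the coset of $-n$. Note that this paper does not itself prove Theorem \ref{edbasis} --- it is recalled from \cite{mosteig} --- but the structural facts it quotes from that proof in Section \ref{spanset} (numerator degree less than denominator degree, simple poles, poles at roots of unity) are precisely the steps you re-derive, so your route is essentially the same as the cited one.
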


Although this theorem provides us with a basis for the space of
rational functions fixed by $\phi_{s,t}$, it is somewhat
unsatisfactory in that it does not give us a good sense of what it
means for an integer $r$ to be distinguished with respect to the
pair $(s,t)$.  It was shown in \cite{mosteig} that the collection of
integers that are distinguished with respect to $(s,t)$ has infinite
cardinality, but that is pretty much the limit of what was
discussed.  In this paper, we explore one of the questions posed in
\cite{mosteig}, namely, whether or not $\Omega(s,t)$ is a union of
arithmetic sequences.  We begin by examining this problem in Section
\ref{31} and show, among other results that, indeed, $\Omega(3,1)$
is an infinite union of arithmetic sequences.  In Section \ref{st},
we generalize the results of Section \ref{31} to the case when
working with an arbitrary pair $(s,t)$.  In particular, we show that
$\Omega(s,t)$ is a union of arithmetic sequences, and then we
provide conditions for when multiples of a particular form of a
fixed integer are distinguished with respect to $(s,t)$.

In the course of studying distinguished integers with respect to a
given pair $(s,t)$, another collection of rational functions that
span that space of functions fixed by $\phi_{s,t}$ was discovered.
In Section \ref{spanset}, we describe this spanning set and discuss
its relationship to the collection of functions of the form
$\psi_{s,t,r,n}$.

\section{Distinguished with Respect to $(3,1)$} \label{31} \setcounter{equation}{0}

In this section, we will examine the special case of
$(3,1)$-distinguished integers.  We begin with this case, as it is
the simplest interesting case.  Moreover, the experimental data in this case
suggests a number of avenues for investigation.  From the analysis
of the $(3,1)$ case, we can discover several interesting
propositions, some of which we generalize in the next section.

The table below shows all the integers up to 204 that are
distinguished (shaded) with respect to $(3,1)$.

\begin{center}
\begin{tabular}{|c|c|c|c|c|c|c|c|c|c|c|c|}
  \hline
 \multicolumn{1}{|>{\columncolor[rgb]{0.7,0.7,0.7}}c|}{1} & \multicolumn{1}{>{\columncolor[rgb]{1,1,1}}c|}{2}
 &  \multicolumn{1}{>{\columncolor[rgb]{1,1,1}}c|}{3} &  \multicolumn{1}{>{\columncolor[rgb]{0.7,0.7,0.7}}c|}{4}
 &  \multicolumn{1}{>{\columncolor[rgb]{0.7,0.7,0.7}}c|}{5} &  \multicolumn{1}{>{\columncolor[rgb]{1,1,1}}c|}{6}
 &  \multicolumn{1}{>{\columncolor[rgb]{0.7,0.7,0.7}}c|}{7} &  \multicolumn{1}{>{\columncolor[rgb]{1,1,1}}c|}{8}
 &  \multicolumn{1}{>{\columncolor[rgb]{1,1,1}}c|}{9} &  \multicolumn{1}{>{\columncolor[rgb]{0.7,0.7,0.7}}c|}{10}
 &  \multicolumn{1}{>{\columncolor[rgb]{0.7,0.7,0.7}}c|}{11} &  \multicolumn{1}{>{\columncolor[rgb]{1,1,1}}c|}{12}
 \\
  \hline
 \multicolumn{1}{|>{\columncolor[rgb]{0.7,0.7,0.7}}c|}{13} & \multicolumn{1}{>{\columncolor[rgb]{0.7,0.7,0.7}}c|}{14}
&  \multicolumn{1}{>{\columncolor[rgb]{1,1,1}}c|}{15} &
\multicolumn{1}{>{\columncolor[rgb]{1,1,1}}c|}{16} &
\multicolumn{1}{>{\columncolor[rgb]{0.7,0.7,0.7}}c|}{17} &
\multicolumn{1}{>{\columncolor[rgb]{1,1,1}}c|}{18} &
\multicolumn{1}{>{\columncolor[rgb]{0.7,0.7,0.7}}c|}{19} &
\multicolumn{1}{>{\columncolor[rgb]{0.7,0.7,0.7}}c|}{20} &
\multicolumn{1}{>{\columncolor[rgb]{1,1,1}}c|}{21} &
\multicolumn{1}{>{\columncolor[rgb]{1,1,1}}c|}{22} &
\multicolumn{1}{>{\columncolor[rgb]{0.7,0.7,0.7}}c|}{23} &
\multicolumn{1}{>{\columncolor[rgb]{1,1,1}}c|}{24}  \\
  \hline
 \multicolumn{1}{|>{\columncolor[rgb]{0.7,0.7,0.7}}c|}{25} &
 \multicolumn{1}{>{\columncolor[rgb]{1,1,1}}c|}{26}
&  \multicolumn{1}{>{\columncolor[rgb]{1,1,1}}c|}{27} &
\multicolumn{1}{>{\columncolor[rgb]{0.7,0.7,0.7}}c|}{28} &
\multicolumn{1}{>{\columncolor[rgb]{0.7,0.7,0.7}}c|}{29} &
\multicolumn{1}{>{\columncolor[rgb]{1,1,1}}c|}{30} &
\multicolumn{1}{>{\columncolor[rgb]{0.7,0.7,0.7}}c|}{31} &
\multicolumn{1}{>{\columncolor[rgb]{1,1,1}}c|}{32} &
\multicolumn{1}{>{\columncolor[rgb]{1,1,1}}c|}{33} &
\multicolumn{1}{>{\columncolor[rgb]{0.7,0.7,0.7}}c|}{34} &
\multicolumn{1}{>{\columncolor[rgb]{0.7,0.7,0.7}}c|}{35} &
\multicolumn{1}{>{\columncolor[rgb]{1,1,1}}c|}{36}  \\
  \hline
 \multicolumn{1}{|>{\columncolor[rgb]{0.7,0.7,0.7}}c|}{37} &
 \multicolumn{1}{>{\columncolor[rgb]{0.7,0.7,0.7}}c|}{38}
&  \multicolumn{1}{>{\columncolor[rgb]{1,1,1}}c|}{39} &
\multicolumn{1}{>{\columncolor[rgb]{0.7,0.7,0.7}}c|}{40} &
\multicolumn{1}{>{\columncolor[rgb]{0.7,0.7,0.7}}c|}{41} &
\multicolumn{1}{>{\columncolor[rgb]{1,1,1}}c|}{42} &
\multicolumn{1}{>{\columncolor[rgb]{0.7,0.7,0.7}}c|}{43} &
\multicolumn{1}{>{\columncolor[rgb]{0.7,0.7,0.7}}c|}{44} &
\multicolumn{1}{>{\columncolor[rgb]{1,1,1}}c|}{45} &
\multicolumn{1}{>{\columncolor[rgb]{1,1,1}}c|}{46} &
\multicolumn{1}{>{\columncolor[rgb]{0.7,0.7,0.7}}c|}{47} &
\multicolumn{1}{>{\columncolor[rgb]{1,1,1}}c|}{48}  \\
  \hline
   \multicolumn{1}{|>{\columncolor[rgb]{0.7,0.7,0.7}}c|}{49} &
 \multicolumn{1}{>{\columncolor[rgb]{0.7,0.7,0.7}}c|}{50}
&  \multicolumn{1}{>{\columncolor[rgb]{1,1,1}}c|}{51} &
\multicolumn{1}{>{\columncolor[rgb]{0.7,0.7,0.7}}c|}{52} &
\multicolumn{1}{>{\columncolor[rgb]{0.7,0.7,0.7}}c|}{53} &
\multicolumn{1}{>{\columncolor[rgb]{1,1,1}}c|}{54} &
\multicolumn{1}{>{\columncolor[rgb]{0.7,0.7,0.7}}c|}{55} &
\multicolumn{1}{>{\columncolor[rgb]{1,1,1}}c|}{56} &
\multicolumn{1}{>{\columncolor[rgb]{1,1,1}}c|}{57} &
\multicolumn{1}{>{\columncolor[rgb]{0.7,0.7,0.7}}c|}{58} &
\multicolumn{1}{>{\columncolor[rgb]{0.7,0.7,0.7}}c|}{59} &
\multicolumn{1}{>{\columncolor[rgb]{1,1,1}}c|}{60}  \\
  \hline
 \multicolumn{1}{|>{\columncolor[rgb]{0.7,0.7,0.7}}c|}{61} &
 \multicolumn{1}{>{\columncolor[rgb]{0.7,0.7,0.7}}c|}{62}
&  \multicolumn{1}{>{\columncolor[rgb]{1,1,1}}c|}{63} &
\multicolumn{1}{>{\columncolor[rgb]{1,1,1}}c|}{64} &
\multicolumn{1}{>{\columncolor[rgb]{0.7,0.7,0.7}}c|}{65} &
\multicolumn{1}{>{\columncolor[rgb]{1,1,1}}c|}{66} &
\multicolumn{1}{>{\columncolor[rgb]{0.7,0.7,0.7}}c|}{67} &
\multicolumn{1}{>{\columncolor[rgb]{0.7,0.7,0.7}}c|}{68} &
\multicolumn{1}{>{\columncolor[rgb]{1,1,1}}c|}{69} &
\multicolumn{1}{>{\columncolor[rgb]{0.7,0.7,0.7}}c|}{70} &
\multicolumn{1}{>{\columncolor[rgb]{0.7,0.7,0.7}}c|}{71} &
\multicolumn{1}{>{\columncolor[rgb]{1,1,1}}c|}{72}  \\
  \hline
   \multicolumn{1}{|>{\columncolor[rgb]{0.7,0.7,0.7}}c|}{73} &
 \multicolumn{1}{>{\columncolor[rgb]{0.7,0.7,0.7}}c|}{74}
&  \multicolumn{1}{>{\columncolor[rgb]{1,1,1}}c|}{75} &
\multicolumn{1}{>{\columncolor[rgb]{0.7,0.7,0.7}}c|}{76} &
\multicolumn{1}{>{\columncolor[rgb]{0.7,0.7,0.7}}c|}{77} &
\multicolumn{1}{>{\columncolor[rgb]{1,1,1}}c|}{78} &
\multicolumn{1}{>{\columncolor[rgb]{0.7,0.7,0.7}}c|}{79} &
\multicolumn{1}{>{\columncolor[rgb]{1,1,1}}c|}{80} &
\multicolumn{1}{>{\columncolor[rgb]{1,1,1}}c|}{81} &
\multicolumn{1}{>{\columncolor[rgb]{0.7,0.7,0.7}}c|}{82} &
\multicolumn{1}{>{\columncolor[rgb]{0.7,0.7,0.7}}c|}{83} &
\multicolumn{1}{>{\columncolor[rgb]{1,1,1}}c|}{84}  \\
  \hline
   \multicolumn{1}{|>{\columncolor[rgb]{0.7,0.7,0.7}}c|}{85} &
 \multicolumn{1}{>{\columncolor[rgb]{0.7,0.7,0.7}}c|}{86}
&  \multicolumn{1}{>{\columncolor[rgb]{1,1,1}}c|}{87} &
\multicolumn{1}{>{\columncolor[rgb]{1,1,1}}c|}{88} &
\multicolumn{1}{>{\columncolor[rgb]{0.7,0.7,0.7}}c|}{89} &
\multicolumn{1}{>{\columncolor[rgb]{1,1,1}}c|}{90} &
\multicolumn{1}{>{\columncolor[rgb]{0.7,0.7,0.7}}c|}{91} &
\multicolumn{1}{>{\columncolor[rgb]{0.7,0.7,0.7}}c|}{92} &
\multicolumn{1}{>{\columncolor[rgb]{1,1,1}}c|}{93} &
\multicolumn{1}{>{\columncolor[rgb]{1,1,1}}c|}{94} &
\multicolumn{1}{>{\columncolor[rgb]{0.7,0.7,0.7}}c|}{95} &
\multicolumn{1}{>{\columncolor[rgb]{1,1,1}}c|}{96}  \\
  \hline
 \multicolumn{1}{|>{\columncolor[rgb]{0.7,0.7,0.7}}c|}{97} &
 \multicolumn{1}{>{\columncolor[rgb]{0.7,0.7,0.7}}c|}{98}
&  \multicolumn{1}{>{\columncolor[rgb]{1,1,1}}c|}{99} &
\multicolumn{1}{>{\columncolor[rgb]{0.7,0.7,0.7}}c|}{100} &
\multicolumn{1}{>{\columncolor[rgb]{0.7,0.7,0.7}}c|}{101} &
\multicolumn{1}{>{\columncolor[rgb]{1,1,1}}c|}{102} &
\multicolumn{1}{>{\columncolor[rgb]{0.7,0.7,0.7}}c|}{103} &
\multicolumn{1}{>{\columncolor[rgb]{1,1,1}}c|}{104} &
\multicolumn{1}{>{\columncolor[rgb]{1,1,1}}c|}{105} &
\multicolumn{1}{>{\columncolor[rgb]{0.7,0.7,0.7}}c|}{106} &
\multicolumn{1}{>{\columncolor[rgb]{0.7,0.7,0.7}}c|}{107} &
\multicolumn{1}{>{\columncolor[rgb]{1,1,1}}c|}{108}  \\
  \hline
   \multicolumn{1}{|>{\columncolor[rgb]{0.7,0.7,0.7}}c|}{109} &
 \multicolumn{1}{>{\columncolor[rgb]{0.7,0.7,0.7}}c|}{110}
&  \multicolumn{1}{>{\columncolor[rgb]{1,1,1}}c|}{111} &
\multicolumn{1}{>{\columncolor[rgb]{1,1,1}}c|}{112} &
\multicolumn{1}{>{\columncolor[rgb]{0.7,0.7,0.7}}c|}{113} &
\multicolumn{1}{>{\columncolor[rgb]{1,1,1}}c|}{114} &
\multicolumn{1}{>{\columncolor[rgb]{0.7,0.7,0.7}}c|}{115} &
\multicolumn{1}{>{\columncolor[rgb]{0.7,0.7,0.7}}c|}{116} &
\multicolumn{1}{>{\columncolor[rgb]{1,1,1}}c|}{117} &
\multicolumn{1}{>{\columncolor[rgb]{1,1,1}}c|}{118} &
\multicolumn{1}{>{\columncolor[rgb]{0.7,0.7,0.7}}c|}{119} &
\multicolumn{1}{>{\columncolor[rgb]{1,1,1}}c|}{120}  \\
  \hline
 \multicolumn{1}{|>{\columncolor[rgb]{0.7,0.7,0.7}}c|}{121} &
 \multicolumn{1}{>{\columncolor[rgb]{0.7,0.7,0.7}}c|}{122}
&  \multicolumn{1}{>{\columncolor[rgb]{1,1,1}}c|}{123} &
\multicolumn{1}{>{\columncolor[rgb]{0.7,0.7,0.7}}c|}{124} &
\multicolumn{1}{>{\columncolor[rgb]{0.7,0.7,0.7}}c|}{125} &
\multicolumn{1}{>{\columncolor[rgb]{1,1,1}}c|}{126} &
\multicolumn{1}{>{\columncolor[rgb]{0.7,0.7,0.7}}c|}{127} &
\multicolumn{1}{>{\columncolor[rgb]{1,1,1}}c|}{128} &
\multicolumn{1}{>{\columncolor[rgb]{1,1,1}}c|}{129} &
\multicolumn{1}{>{\columncolor[rgb]{0.7,0.7,0.7}}c|}{130} &
\multicolumn{1}{>{\columncolor[rgb]{0.7,0.7,0.7}}c|}{131} &
\multicolumn{1}{>{\columncolor[rgb]{1,1,1}}c|}{132}  \\
  \hline
   \multicolumn{1}{|>{\columncolor[rgb]{0.7,0.7,0.7}}c|}{133} &
 \multicolumn{1}{>{\columncolor[rgb]{0.7,0.7,0.7}}c|}{134}
&  \multicolumn{1}{>{\columncolor[rgb]{1,1,1}}c|}{135} &
\multicolumn{1}{>{\columncolor[rgb]{0.7,0.7,0.7}}c|}{136} &
\multicolumn{1}{>{\columncolor[rgb]{0.7,0.7,0.7}}c|}{137} &
\multicolumn{1}{>{\columncolor[rgb]{1,1,1}}c|}{138} &
\multicolumn{1}{>{\columncolor[rgb]{0.7,0.7,0.7}}c|}{139} &
\multicolumn{1}{>{\columncolor[rgb]{0.7,0.7,0.7}}c|}{140} &
\multicolumn{1}{>{\columncolor[rgb]{1,1,1}}c|}{141} &
\multicolumn{1}{>{\columncolor[rgb]{1,1,1}}c|}{142} &
\multicolumn{1}{>{\columncolor[rgb]{0.7,0.7,0.7}}c|}{143} &
\multicolumn{1}{>{\columncolor[rgb]{1,1,1}}c|}{144}  \\
  \hline
   \multicolumn{1}{|>{\columncolor[rgb]{0.7,0.7,0.7}}c|}{145} &
 \multicolumn{1}{>{\columncolor[rgb]{0.7,0.7,0.7}}c|}{146}
&  \multicolumn{1}{>{\columncolor[rgb]{1,1,1}}c|}{147} &
\multicolumn{1}{>{\columncolor[rgb]{0.7,0.7,0.7}}c|}{148} &
\multicolumn{1}{>{\columncolor[rgb]{0.7,0.7,0.7}}c|}{149} &
\multicolumn{1}{>{\columncolor[rgb]{1,1,1}}c|}{150} &
\multicolumn{1}{>{\columncolor[rgb]{0.7,0.7,0.7}}c|}{151} &
\multicolumn{1}{>{\columncolor[rgb]{1,1,1}}c|}{152} &
\multicolumn{1}{>{\columncolor[rgb]{1,1,1}}c|}{153} &
\multicolumn{1}{>{\columncolor[rgb]{0.7,0.7,0.7}}c|}{154} &
\multicolumn{1}{>{\columncolor[rgb]{0.7,0.7,0.7}}c|}{155} &
\multicolumn{1}{>{\columncolor[rgb]{1,1,1}}c|}{156}  \\
  \hline
 \multicolumn{1}{|>{\columncolor[rgb]{0.7,0.7,0.7}}c|}{157} &
 \multicolumn{1}{>{\columncolor[rgb]{0.7,0.7,0.7}}c|}{158}
&  \multicolumn{1}{>{\columncolor[rgb]{1,1,1}}c|}{159} &
\multicolumn{1}{>{\columncolor[rgb]{1,1,1}}c|}{160} &
\multicolumn{1}{>{\columncolor[rgb]{0.7,0.7,0.7}}c|}{161} &
\multicolumn{1}{>{\columncolor[rgb]{1,1,1}}c|}{162} &
\multicolumn{1}{>{\columncolor[rgb]{0.7,0.7,0.7}}c|}{163} &
\multicolumn{1}{>{\columncolor[rgb]{0.7,0.7,0.7}}c|}{164} &
\multicolumn{1}{>{\columncolor[rgb]{1,1,1}}c|}{165} &
\multicolumn{1}{>{\columncolor[rgb]{1,1,1}}c|}{166} &
\multicolumn{1}{>{\columncolor[rgb]{0.7,0.7,0.7}}c|}{167} &
\multicolumn{1}{>{\columncolor[rgb]{1,1,1}}c|}{168}  \\
  \hline
   \multicolumn{1}{|>{\columncolor[rgb]{0.7,0.7,0.7}}c|}{169} &
 \multicolumn{1}{>{\columncolor[rgb]{0.7,0.7,0.7}}c|}{170}
&  \multicolumn{1}{>{\columncolor[rgb]{1,1,1}}c|}{171} &
\multicolumn{1}{>{\columncolor[rgb]{0.7,0.7,0.7}}c|}{172} &
\multicolumn{1}{>{\columncolor[rgb]{0.7,0.7,0.7}}c|}{173} &
\multicolumn{1}{>{\columncolor[rgb]{1,1,1}}c|}{174} &
\multicolumn{1}{>{\columncolor[rgb]{0.7,0.7,0.7}}c|}{175} &
\multicolumn{1}{>{\columncolor[rgb]{1,1,1}}c|}{176} &
\multicolumn{1}{>{\columncolor[rgb]{1,1,1}}c|}{177} &
\multicolumn{1}{>{\columncolor[rgb]{0.7,0.7,0.7}}c|}{178} &
\multicolumn{1}{>{\columncolor[rgb]{0.7,0.7,0.7}}c|}{179} &
\multicolumn{1}{>{\columncolor[rgb]{1,1,1}}c|}{180}  \\
  \hline
 \multicolumn{1}{|>{\columncolor[rgb]{0.7,0.7,0.7}}c|}{181} &
 \multicolumn{1}{>{\columncolor[rgb]{0.7,0.7,0.7}}c|}{182}
&  \multicolumn{1}{>{\columncolor[rgb]{1,1,1}}c|}{183} &
\multicolumn{1}{>{\columncolor[rgb]{1,1,1}}c|}{184} &
\multicolumn{1}{>{\columncolor[rgb]{0.7,0.7,0.7}}c|}{185} &
\multicolumn{1}{>{\columncolor[rgb]{1,1,1}}c|}{186} &
\multicolumn{1}{>{\columncolor[rgb]{0.7,0.7,0.7}}c|}{187} &
\multicolumn{1}{>{\columncolor[rgb]{0.7,0.7,0.7}}c|}{188} &
\multicolumn{1}{>{\columncolor[rgb]{1,1,1}}c|}{189} &
\multicolumn{1}{>{\columncolor[rgb]{0.7,0.7,0.7}}c|}{190} &
\multicolumn{1}{>{\columncolor[rgb]{0.7,0.7,0.7}}c|}{191} &
\multicolumn{1}{>{\columncolor[rgb]{1,1,1}}c|}{192}  \\
  \hline
   \multicolumn{1}{|>{\columncolor[rgb]{0.7,0.7,0.7}}c|}{193} &
 \multicolumn{1}{>{\columncolor[rgb]{0.7,0.7,0.7}}c|}{194}
&  \multicolumn{1}{>{\columncolor[rgb]{1,1,1}}c|}{195} &
\multicolumn{1}{>{\columncolor[rgb]{0.7,0.7,0.7}}c|}{196} &
\multicolumn{1}{>{\columncolor[rgb]{0.7,0.7,0.7}}c|}{197} &
\multicolumn{1}{>{\columncolor[rgb]{1,1,1}}c|}{198} &
\multicolumn{1}{>{\columncolor[rgb]{0.7,0.7,0.7}}c|}{199} &
\multicolumn{1}{>{\columncolor[rgb]{0.7,0.7,0.7}}c|}{200} &
\multicolumn{1}{>{\columncolor[rgb]{1,1,1}}c|}{201} &
\multicolumn{1}{>{\columncolor[rgb]{0.7,0.7,0.7}}c|}{202} &
\multicolumn{1}{>{\columncolor[rgb]{0.7,0.7,0.7}}c|}{203} &
\multicolumn{1}{>{\columncolor[rgb]{1,1,1}}c|}{204}  \\
  \hline \end{tabular}
\end{center}

Upon examining this table, it seems rather likely that all positive
integers that are congruent to either 1 or 5 modulo 6 must be
distinguished with respect to $(3,1)$.  Moreover, it appears that
all positive integers congruent to 4, 10, 14, or 20 modulo 24 must
be $(3,1)$-distinguished.  In fact, both statements are true and are
mentioned in \cite{mosteig}, and below we will provide proofs. Our
general methods only yield the case of $10$ modulo $60$  and $14$
modulo $88$ rather than modulo $24$.  To obtain the proofs for 10
and 14 modulo 24, we use quadratic reciprocity.  One would hope for
simpler proofs of these last two cases, and the interested reader is
encouraged to look for such proofs.

Of the remaining distinguished integers in the table above, 40 is
the smallest. Again, the pattern seems promising. Multiply the
previous modulus by four to obtain 96. Jumping to conclusions, it
seems likely that all positive integers congruent to 40 modulo 96
must be distinguished. In fact, 40, 136, 232, 328, 424, and 520 are
all distinguished with respect to $(3,1)$, but 616 is not!   This
surprising gap leads to some interesting questions. In light of this
example, it is not clear whether the collection of all integers that
are distinguished with respect to $(3,1)$ can be written as a
(possibly infinite) union of congruence classes, and we now turn to answer this question.

We begin by establishing that odd positive integers relatively prime
to $6$ are $(3,1)$-distinguished.

\begin{lemma}
Every integer congruent to 1 or 5 modulo 6 must be distinguished
with respect to $(3,1)$.
\end{lemma}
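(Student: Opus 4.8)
The plan is to unwind the definition in the specific case $(s,t)=(3,1)$ and observe that the divisibility condition essentially reduces to a parity observation. First I would record that $\beta_{3,1}(k) = (3^k-1)/2$, so an integer $r\ge 2$ with $\gcd(r,3)=1$ is distinguished with respect to $(3,1)$ exactly when $r \mid (3^{\ord{r}{3}}-1)/2$. Next I would note that the integers congruent to $1$ or $5$ modulo $6$ are precisely the positive integers relatively prime to $6$, i.e., those that are simultaneously odd and coprime to $3$. In particular the coprimality requirement $\gcd(r,3)=1$ in the definition is automatic for such $r$, so only the divisibility clause remains to be checked.

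For the divisibility clause, let $r$ be congruent to $1$ or $5$ modulo $6$ and set $k=\ord{r}{3}$, so that by definition $r \mid 3^k-1$. The key step is to upgrade this to $2r \mid 3^k-1$. Since $3^k$ is odd, $3^k-1$ is even, so $2 \mid 3^k-1$; and since $r$ is odd, $\gcd(r,2)=1$. Having $r\mid 3^k-1$ and $2\mid 3^k-1$ with $r$ and $2$ coprime forces $\lcm(r,2)=2r$ to divide $3^k-1$. Dividing through by $2$ then yields
\[
  r \ \Big|\ \frac{3^{\ord{r}{3}}-1}{2} \ =\ \beta_{3,1}(\ord{r}{3}),
\]
which is exactly the condition for $r$ to be distinguished with respect to $(3,1)$.

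There is no serious obstacle here: the whole argument rests on the single observation that coprimality of $r$ to $2$ lets one absorb the factor of $2$ appearing in $\beta_{3,1}$. The only point that needs a word of care is the reduction of the congruence classes $1$ and $5$ modulo $6$ to the conditions "odd" and "coprime to $3$," and the elementary fact that coprime divisors of a common multiple combine to give their product as a divisor. I would present these as one or two short sentences rather than belabored calculation, since the content of the lemma is precisely that the seemingly restrictive divisibility by $\beta_{3,1}(\ord{r}{3})$ is free for integers coprime to $6$.
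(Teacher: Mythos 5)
Your proof is correct and follows essentially the same route as the paper's: reduce the congruence classes $1$ and $5$ modulo $6$ to coprimality with $6$, note $r \mid 3^{\ord{r}{3}}-1$ by definition of order, and use that $3^{\ord{r}{3}}-1$ is even while $r$ is odd to absorb the factor of $2 = 3-1$. The paper states this more tersely, but the content is identical.
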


\begin{proof}
Suppose $r$  is congruent to 1 or 5 modulo 6; that is, $r$ is
relatively prime to $6$. By definition, $3^\ord{r}{3} \equiv 1 \mod
r$, and so $r \mid 3^\ord{r}{3} - 1$. Now, $3^\ord{r}{3}-1$ is even
and $r$ is odd, and so $r \mid \frac{3^\ord{r}{3} - 1}{3-1}$.
\end{proof}

It is a bit trickier to justify that positive integers in the
equivalence classes modulo 24 containing 4 and 20 are distinguished
with respect to $(3,1)$.  Since these equivalence classes consist
solely of even integers, we must employ a different argument.  To
begin, we note the following result.

\begin{lemma} \label{ord31reduction}
Let $r$ be a positive integer that is relatively prime to $3$. Then
$r$ is distinguished with respect to $(3,1)$ if and only if
$\ord{r}{3} = \ord{2r}{3}$.
\end{lemma}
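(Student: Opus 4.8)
The plan is to reduce the divisibility appearing in the definition of \emph{distinguished} to a clean comparison of the two multiplicative orders. First I would set $d = \ord{r}{3}$ and $e = \ord{2r}{3}$; both are well defined, since $\gcd(r,3)=1$ forces $\gcd(2r,3)=1$. By definition, $r$ is $(3,1)$-distinguished exactly when $r \mid \beta_{3,1}(d)$, and since $\beta_{3,1}(d) = \frac{3^d-1}{2}$ (an integer, because $3^d-1$ is even), this condition is equivalent to $2r \mid 3^d - 1$.

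Next I would invoke the standard characterization of order: for any modulus $m$ coprime to $3$, one has $m \mid 3^k - 1$ if and only if $\ord{m}{3} \mid k$. Applying this with $m = 2r$ and $k = d$ shows that $r$ is distinguished if and only if $e \mid d$.

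The reverse divisibility comes for free: reducing $3^e \equiv 1 \pmod{2r}$ modulo $r$ gives $3^e \equiv 1 \pmod r$, so $d = \ord{r}{3}$ divides $e$. Hence $d \mid e$ holds unconditionally, and combining this with the equivalence ``distinguished $\iff e \mid d$'' yields that $r$ is distinguished precisely when $d = e$, that is, when $\ord{r}{3} = \ord{2r}{3}$.

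I do not anticipate a serious obstacle; the only points requiring care are keeping the two divisibility directions straight and observing that passing between $r \mid \frac{3^d-1}{2}$ and $2r \mid 3^d - 1$ is legitimate because $3^d - 1$ is always even. As a consistency check, when $r$ is odd the Chinese Remainder Theorem gives $e = \lcm(\ord{2}{3},\ord{r}{3}) = d$ immediately, recovering the earlier fact that every $r$ coprime to $6$ is distinguished; the real content of the lemma therefore lies in the even case, which the argument above treats uniformly.
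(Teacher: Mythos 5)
Your proof is correct and takes essentially the same route as the paper: both hinge on rewriting $r \mid \frac{3^{\ord{r}{3}}-1}{2}$ as $2r \mid 3^{\ord{r}{3}}-1$ and then comparing the orders modulo $r$ and modulo $2r$, with the observation that $\ord{r}{3}$ always divides (the paper says: is at most) $\ord{2r}{3}$. Your phrasing via divisibility of orders rather than inequalities, and as a single chain of equivalences rather than two directions, is only a cosmetic difference.
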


\begin{proof}
If  $r$ is distinguished with respect to $(3,1)$, then $r \mid
(3^\ord{r}{3}-1)/(3-1)$, and so $3^\ord{r}{3} \equiv 1 \mod 2r$.
 Thus $\ord{r}{3} \ge \ord{2r}{3}$, and since the
reverse inequality always holds,  $\ord{r}{3} = \ord{2r}{3}$.

Conversely, suppose $\ord{r}{3} = \ord{2r}{3}$.  Since
$3^\ord{2r}{3} \equiv 1 \mod 2r,$ it follows that $3^\ord{r}{3}
\equiv 1 \mod 2r,$ and so $2r \mid 3^\ord{r}{3} -1$, in which case
$r \mid  (3^\ord{r}{3} -1)/(3-1)$.
\end{proof}

We note that if $m$ and $n$ are relatively prime, then
$\ord{mn}{a}=\lcm(\ord{m}{a},\ord{n}{a})$ (which follows as the
group of multiplicative units modulo $mn$ is a direct product of the
group of units modulo $m$ and the group of units modulo $n$).  As a
result, by writing $r=2^t k$ with $\gcd(2,k)=1$, we see that the
validity of the equation $\ord{r}{3}=\ord{2r}{3}$ hinges on the
relationship between $\ord{2^t}{3}$, $\ord{2^{t+1}}{3}$ and
$\ord{k}{3}$.  We begin our more general analysis by examining the
relationship between the first two of these quantities.

\begin{prop}\label{ord2ell}
For $\ell \ge 3$, $\ord{2^\ell}{3}  = 2^{\ell-2}$.
\end{prop}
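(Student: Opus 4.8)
The plan is to translate the statement into a $2$-adic valuation computation and then pin down the exact valuation by induction. Since $\phi(2^\ell) = 2^{\ell-1}$, the order $\ord{2^\ell}{3}$ divides $2^{\ell-1}$ and is therefore a power of $2$; equivalently, $\ord{2^\ell}{3}$ is the smallest $k$ for which $2^\ell \mid 3^k - 1$. Writing $v_2$ for the $2$-adic valuation, the proposition amounts to the two assertions $v_2(3^{2^{\ell-2}} - 1) \ge \ell$ and $v_2(3^{2^{\ell-3}} - 1) < \ell$.

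The engine of the argument will be the exact congruence
\[
3^{2^{\ell-2}} \equiv 1 + 2^\ell \pmod{2^{\ell+1}}, \qquad \ell \ge 3,
\]
which I would establish by induction on $\ell$. The base case $\ell = 3$ is the observation that $3^2 = 9 = 1 + 2^3$, so that $9 \equiv 1 + 2^3 \pmod{2^4}$. For the inductive step I would write $3^{2^{\ell-2}} = 1 + 2^\ell(1 + 2t)$ for some integer $t$ and square: since $(1 + 2^\ell(1+2t))^2 = 1 + 2^{\ell+1}(1+2t) + 2^{2\ell}(1+2t)^2$ and $2\ell \ge \ell+2$ for $\ell \ge 2$, the last term vanishes modulo $2^{\ell+2}$, leaving $3^{2^{\ell-1}} \equiv 1 + 2^{\ell+1}(1+2t) \equiv 1 + 2^{\ell+1} \pmod{2^{\ell+2}}$, which is the claim for $\ell+1$.

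With this congruence the conclusion is immediate. It shows $v_2(3^{2^{\ell-2}} - 1) = \ell$, so $3^{2^{\ell-2}} \equiv 1 \pmod{2^\ell}$; and applying the same congruence with $\ell$ replaced by $\ell-1$ (valid for $\ell \ge 4$) gives $3^{2^{\ell-3}} \equiv 1 + 2^{\ell-1} \not\equiv 1 \pmod{2^\ell}$, so no exponent smaller than $2^{\ell-2}$ returns $1$. The remaining case $\ell = 3$ is checked directly, since $3 \not\equiv 1$ but $3^2 \equiv 1 \pmod 8$. Hence $\ord{2^\ell}{3} = 2^{\ell-2}$.

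The only delicate point, and the main obstacle, is ensuring that the multiplier of $2^\ell$ stays odd at every stage, since this is precisely what forbids the order from collapsing below $2^{\ell-2}$; this is why I carry the induction hypothesis as an exact congruence modulo $2^{\ell+1}$ rather than as a mere divisibility modulo $2^\ell$. As an alternative, the valuation identity $v_2(3^k-1) = 2 + v_2(k)$ for even $k$ drops out of the lifting-the-exponent lemma for $p = 2$, yielding a shorter but less self-contained proof.
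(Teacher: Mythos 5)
Your proof is correct and takes essentially the same route as the paper: both arguments establish, by induction on $\ell$, an exact $2$-adic congruence one power of $2$ beyond what the order computation needs (you prove $3^{2^{\ell-2}} \equiv 1 + 2^\ell \pmod{2^{\ell+1}}$, the paper proves $3^{2^\ell} - 1 \equiv 2^{\ell+2} \pmod{2^{\ell+3}}$, which is the same statement after reindexing), precisely so that the odd multiplier of the leading power of $2$ survives each step. Your squaring of $1 + 2^\ell(1+2t)$ is the same computation as the paper's factorization $3^{2^{\ell+1}} - 1 = (3^{2^\ell}+1)(3^{2^\ell}-1)$ with $3^{2^\ell}+1 \equiv 2 \pmod 4$; if anything, you are more explicit than the paper in deducing $\ord{2^\ell}{3} = 2^{\ell-2}$ from the exact valuation.
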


\begin{proof}
We will prove that $3^{2^\ell} - 1 \equiv 2^{\ell+2} \mod
2^{\ell+3}$ by induction, from which the result follows. It is
easily verified that this holds for $\ell = 3$, and so we assume
$3^{2^\ell} - 1 \equiv 2^{\ell+2} \mod 2^{\ell+3}$ for a particular
value of $\ell$. From this, it follows that for some $q\in \N$,
$3^{2^\ell} - 1 - 2^{\ell+2} = 2^{\ell+3} q$. Moreover, for all
$\ell \ge 1$, $3^{2^\ell} \equiv 1 \mod 4$, and so $3^{2^\ell}+1
\equiv 2 \mod 4$; thus $3^{2^\ell}+1 = 4q'+2$ for some $q' \in \N$.
Thus, $3^{2^{\ell+1}}- 1 = (3^{2^\ell}+1)(3^{2^\ell}-1) =
(4q'+2)(2^{\ell+2} + 2^{\ell+3}q) = 2^{\ell+3}(1+q)(1+2q)$, and so
$3^{2^{\ell+1}}-1 \equiv 2^{\ell+3} \mod 2^{\ell+4}$.
\end{proof}

We note that $\ord{2}{3}=1$ and $\ord{4}{3}=2=\ord{8}{3}$; from the
latter we have that $4$ is necessarily $(3,1)$-distinguished. The
following lemma generalizes the case of $4$ to numbers of the form
$2^\ell k$ with $\gcd(6,k)=1$.

\begin{lemma}\label{dist3}
Given a positive integer $r = 2^\ell k$ such that $\ell \ge 3$ and
$\gcd(k,6) = 1$, $r$ is distinguished with respect to $(3,1)$ if and
only if $2^{\ell-1} \mid \ord{k}{3}$.  Moreover, if $r=2k$ with
$\gcd(k,6)=1$, then $r$ is $(3,1)$-distinguished if and only if $2
\mid \ord{k}{3}$, and if $r=4k$ with $\gcd(k,6)=1$, then $r$
$(3,1)$-distinguished.
\end{lemma}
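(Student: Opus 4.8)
The plan is to reduce everything to Lemma \ref{ord31reduction}, which says that a positive integer coprime to $3$ is $(3,1)$-distinguished precisely when $\ord{r}{3} = \ord{2r}{3}$. Since $\gcd(k,6)=1$ forces $\gcd(r,3)=1$ in every case, this criterion applies, and the whole lemma becomes a computation comparing two orders. First I would invoke the multiplicativity fact noted just before Proposition \ref{ord2ell}: because $\gcd(2^\ell,k)=1$ and $\gcd(2^{\ell+1},k)=1$, one has
\[
\ord{r}{3} = \lcm(\ord{2^\ell}{3},\ord{k}{3}), \qquad \ord{2r}{3} = \lcm(\ord{2^{\ell+1}}{3},\ord{k}{3}),
\]
so the entire question reduces to how the $2$-power orders $\ord{2^\ell}{3}$ and $\ord{2^{\ell+1}}{3}$ interact with $\ord{k}{3}$.

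Next I would substitute the explicit $2$-power orders. For the main case $\ell \ge 3$, Proposition \ref{ord2ell} gives $\ord{2^\ell}{3} = 2^{\ell-2}$ and $\ord{2^{\ell+1}}{3} = 2^{\ell-1}$. Writing $\ord{k}{3} = 2^a m$ with $m$ odd, the two lcms become $2^{\max(\ell-2,a)}m$ and $2^{\max(\ell-1,a)}m$, which agree if and only if $\max(\ell-2,a) = \max(\ell-1,a)$. The elementary observation is that this equality holds exactly when $a \ge \ell-1$, equivalently $2^{\ell-1}\mid \ord{k}{3}$, since for $a \le \ell-2$ the two maxima are the consecutive integers $\ell-2$ and $\ell-1$ and hence differ. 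This establishes the main biconditional.

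For the two remaining cases I would use the small-order values recorded immediately before the lemma. When $r=2k$ (so $\ell=1$), $\ord{2}{3}=1$ and $\ord{4}{3}=2$, so $\ord{r}{3}=\ord{k}{3}$ while $\ord{2r}{3}=\lcm(2,\ord{k}{3})$; these coincide exactly when $\ord{k}{3}$ is already even, i.e. $2\mid\ord{k}{3}$. When $r=4k$ (so $\ell=2$), both $\ord{4}{3}$ and $\ord{8}{3}$ equal $2$, so $\ord{r}{3}$ and $\ord{2r}{3}$ are the identical quantity $\lcm(2,\ord{k}{3})$, forcing $r$ to be distinguished with no condition on $k$.

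There is no serious obstacle here: once Lemma \ref{ord31reduction} and the lcm-factorization are in hand, the argument is a $2$-adic valuation bookkeeping exercise. The only point that demands a little care is the boundary case $a=\ell-2$ in the main case, which is precisely where the two maxima diverge and hence where the divisibility threshold $2^{\ell-1}\mid\ord{k}{3}$ is pinned down; getting that inequality strict versus non-strict correct is essentially the whole content of the proof.
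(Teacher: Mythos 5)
Your proposal is correct and follows essentially the same route as the paper's own proof: reduce via Lemma \ref{ord31reduction} to the condition $\ord{r}{3}=\ord{2r}{3}$, split both orders as lcms using multiplicativity, substitute the values from Proposition \ref{ord2ell}, and handle $r=2k$ and $r=4k$ with the small-order computations $\ord{2}{3}=1$, $\ord{4}{3}=2=\ord{8}{3}$. If anything, your explicit $2$-adic comparison $\max(\ell-2,a)$ versus $\max(\ell-1,a)$ is slightly more complete than the paper, which disposes of the main biconditional with the one-directional phrase ``this condition holds whenever $\ord{2^{\ell+1}}{3}\mid\ord{k}{3}$'' and leaves the converse implicit.
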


\begin{proof}
By Lemma \ref{ord31reduction}, $r$ is distinguished with respect to
$(3,1)$ if and only if $\ord{r}{3} = \ord{2r}{3}$. If $r$ is of the
form $r = 2^\ell k$ such that $\ell \ge 3$ and $\gcd(k,6) = 1$, then
$\ord{r}{3} = \lcm(\ord{2^\ell}{s},\ord{k}{3})$. Moreover,
$\ord{2r}{3} = \lcm(\ord{2^{\ell+1}}{s},\ord{k}{3})$ and so $r$ is
distinguished with respect to $(3,1)$ if and only if
$\lcm(\ord{2^\ell}{3},\ord{k}{3}) =
\lcm(\ord{2^{\ell+1}}{3},\ord{k}{3})$. Since $\ord{2^{\ell+1}}{3}=2\cdot\ord{2^\ell}{3}$, this condition holds whenever
$\ord{2^{\ell+1}}{3} \mid \ord{k}{3}$.  By Proposition
\ref{ord2ell}, this is equivalent to $2^{\ell-1} \mid \ord{k}{3}$.

For the other two cases, we note that if $r=2k$ with $\gcd(6,k)=1$,
then $\ord{r}{3}=\ord{k}{3}$, while $\ord{2r}{3}=\lcm(2,\ord{k}{3})$
so that we have equality if and only if $\ord{k}{3}$ is even.
Alternatively, if $r=4k$ with $\gcd(6,k)=1$, then
$\ord{r}{3}=\lcm(2,\ord{k}{3})$, while
$\ord{2r}{3}=\lcm(2,\ord{k}{3})$ as $\ord{8}{3}=2=\ord{4}{3}$.
\end{proof}

Since any number $r$ congruent to $4$ or $20$ modulo $24$ is of the
form $r=4k$ where $\gcd(6,k)=1$, the above lemma implies that all
such numbers are $(3,1)$-distinguished.  Sadly, the appearances of
arithmetic series $10$ and $14$ modulo $24$ in our chart are still
hard to explain.

The following results answer the original question concerning
whether all the distinguished integers with respect to $(3,1)$ can
be written as an infinite union of arithmetic sequences.

\begin{cor} \label{31congr6r}
Suppose $r$ is $(3,1)$-distinguished.  Then all integers congruent
to $r$ or $5r$ modulo $6r$ are also $(3,1)$-distinguished.
\end{cor}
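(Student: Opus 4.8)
The plan is to first reinterpret the two congruence classes as a single family of multiples of $r$. An integer is congruent to $r$ or $5r$ modulo $6r$ precisely when it has the form $rc$ with $\gcd(c,6)=1$: if $c\equiv 1\pmod 6$ then $rc\equiv r\pmod{6r}$, if $c\equiv 5\pmod 6$ then $rc\equiv 5r\pmod{6r}$, and every residue coprime to $6$ is one of these two. Thus it suffices to fix a positive integer $c$ with $\gcd(c,6)=1$ and to show that $rc$ is $(3,1)$-distinguished. Note that $rc\ge 2$ and $\gcd(rc,3)=1$, so $rc$ is at least a legitimate candidate for the definition.

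Next I would introduce notation that isolates the power of $2$. Write $r=2^\ell m$ with $m$ odd; since $r$ is distinguished we have $\gcd(r,3)=1$, hence $\gcd(m,6)=1$. Because $c$ is odd and coprime to $3$, multiplying by $c$ does not change the $2$-adic valuation, so $rc=2^\ell(mc)$, where $mc$ is the odd part of $rc$ and $\gcd(mc,6)=1$. The single observation driving the whole argument is that $m\mid mc$ forces $\ord{m}{3}\mid\ord{mc}{3}$, since any power of $3$ that is $\equiv 1\pmod{mc}$ is also $\equiv 1\pmod m$. In other words, replacing $m$ by $mc$ can only enlarge the order of $3$, never shrink it.

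With this in hand I would finish by a short case analysis on $\ell$, invoking the criteria already proved. If $\ell=0$, then $r$ and $rc$ are both odd and coprime to $6$, so both are distinguished by the earlier lemma that every integer coprime to $6$ is $(3,1)$-distinguished. If $\ell=2$, then $r=4m$ and $rc=4(mc)$, both of which are distinguished unconditionally by Lemma \ref{dist3}. The two remaining cases are where the order-divisibility is used: if $\ell=1$, Lemma \ref{dist3} says $r=2m$ is distinguished iff $2\mid\ord{m}{3}$, and since $2\mid\ord{m}{3}\mid\ord{mc}{3}$ the same criterion certifies $rc=2(mc)$; if $\ell\ge 3$, Lemma \ref{dist3} gives the criterion $2^{\ell-1}\mid\ord{m}{3}$, and again $2^{\ell-1}\mid\ord{m}{3}\mid\ord{mc}{3}$ certifies $rc=2^\ell(mc)$.

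I do not expect a serious obstacle here; the content is entirely front-loaded into the two structural observations, namely the reformulation of the congruence classes as the multiples $rc$ with $\gcd(c,6)=1$, and the monotonicity $\ord{m}{3}\mid\ord{mc}{3}$ under passing to a multiple of the odd part. The only point demanding care is bookkeeping: confirming that each divisibility condition supplied by Lemma \ref{dist3} is stated in terms of the odd part of the modulus, so that the monotonicity applies cleanly, and that the small exponents $\ell\in\{0,2\}$, where no condition is imposed, are handled separately from $\ell=1$ and $\ell\ge 3$.
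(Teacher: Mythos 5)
Your proof is correct, and it is in fact precisely the alternative route the authors explicitly mention and decline: immediately after this corollary the paper remarks, ``We could have used Lemma~\ref{dist3} and a case-by-case analysis for this, but the above argument is both more elegant and more easily generalized.'' Your case analysis is complete and each citation is applied within its hypotheses (your reformulation of the two classes as $rc$ with $\gcd(c,6)=1$ is clean, $\gcd(mc,6)=1$ holds, and the cases $\ell=0,1,2,\ell\ge3$ exhaust all possibilities, with $\ell=0$ correctly handled by the first lemma rather than Lemma~\ref{dist3}). The paper instead argues uniformly: writing $r=2^tk$ with $\gcd(6,k)=1$, Lemma~\ref{ord31reduction} converts distinguishedness into the single identity $\lcm(\ord{2^t}{3},\ord{k}{3})=\lcm(\ord{2^{t+1}}{3},\ord{k}{3})$, and the elementary fact that $\lcm(x,z)=\lcm(y,z)$ implies $\lcm(x,za)=\lcm(y,za)$ transfers this identity from $k$ to $k(1+6m)$ or $k(5+6m)$ with no case split. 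Both arguments run on the same engine, the monotonicity $\ord{k}{3}\mid\ord{kc}{3}$, but they differ in what else they consume: your route pulls in the explicit computation $\ord{2^\ell}{3}=2^{\ell-2}$ (Proposition~\ref{ord2ell}) through the classification in Lemma~\ref{dist3}, so it is more concrete but tied to $s=3$, where only the prime $2$ matters; the paper's version never needs to know any $\ord{2^\ell}{3}$ value, which is exactly why it generalizes verbatim to Proposition~\ref{stprogression} for arbitrary $(s,t)$, where your style of analysis would require a separate case discussion for every prime power dividing $g=(s-1)/\gcd(s-1,t)$. In short: your proof buys explicitness at the price of generality; the paper's buys generality at the price of an abstract lcm lemma.
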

\begin{proof}
Suppose $r$ is $(3,1)$ distinguished, and write $r=2^t k$ where
$\gcd(6,k)=1$.  It follows from Lemma~\ref{ord31reduction} that
$\ord{r}{3}=\ord{2r}{3}$.  This implies that
$$
\lcm(\ord{2^t}{3},\ord{k}{3})=\lcm(\ord{2^{t+1}}{3},\ord{k}{3}).
$$
Suppose $r'=r+6rm$, for some integer $m$.  Then
\begin{eqnarray*}
\ord{r'}{3} & = & \ord{r+6rm}{3} \\
 & = & \ord{2^tk(1+6m)}{3} \\
 & = & \lcm(\ord{2^t}{3},\ord{k(1+6m)}{3}).
 \end{eqnarray*}
Similarly, $\ord{2r'}{3}=\lcm(\ord{2^{t+1}}{3},\ord{k(1+6m)}{3})$.
Note that $\ord{k}{3}$ divides $\ord{k(1+6m)}{3}$.  However, if
$\lcm(x,z)=\lcm(y,z)$, then it must be the case that
$\lcm(x,za)=\lcm(y,za)$ when $x,y,z,a\in\mathbb{Z}$.  Consequently,
letting $x=\ord{2^t}{3}$, $y=\ord{2^{t+1}}{3}$, $z=\ord{k}{3}$ and
$za=\ord{k(1+6m)}{3}$, we have that $\ord{r'}{3}=\ord{2r'}{3}$ so
that $r'$ is $(3,1)$-distinguished. For the case of $5r$ modulo
$6r$, we note that the only change in the above is that $1+6m$ is
replaced by $5+6m$. Thus if $r'=5r+6mr$ for some $m\in\mathbb{N}$,
then $r'$ is $(3,1)$ distinguished too.
\end{proof}

We could have used Lemma~\ref{dist3} and a case-by-case analysis for
this, but the above argument is both more elegant and more easily
generalized. Note that since $10$ is $(3,1)$-distinguished,
Corollary~\ref{31congr6r} implies that if $r\equiv 10 \mod 60$ then
$r$ is also $(3,1)$-distinguished.  Similarly, we know that integers
congruent to $14$ modulo $84$ are $(3,1)$-distinguished.  However,
neither of these quickly leads to an argument for $10$ or $14$
modulo $24$.  On the other hand, we do obtain the following result:

\begin{cor}\label{31progression}
The set of all $(3,1)$-distinguished integers can be written as an
infinite union of arithmetic progressions.
\end{cor}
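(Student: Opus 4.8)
The plan is to read the decomposition straight off Corollary~\ref{31congr6r}, which has already done all of the substantive work. For each $r \in \Omega(3,1)$, set
$$ P_r = \{\, r + 6rm : m \ge 0 \,\}, $$
the arithmetic progression with first term $r$ and common difference $6r$. By Corollary~\ref{31congr6r}, every element of $P_r$ is again $(3,1)$-distinguished, so $P_r \subseteq \Omega(3,1)$; hence the union $\bigcup_{r \in \Omega(3,1)} P_r$ is contained in $\Omega(3,1)$.

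For the reverse inclusion I would simply note that each $r \in \Omega(3,1)$ is the first term of its own progression $P_r$ (take $m = 0$), so $r \in \bigcup_{r' \in \Omega(3,1)} P_{r'}$. Combining the two inclusions gives
$$ \Omega(3,1) = \bigcup_{r \in \Omega(3,1)} P_r, $$
which exhibits $\Omega(3,1)$ as a union of arithmetic progressions. One may equally throw in the progressions $5r + 6rm$ also supplied by the corollary, but the $P_r$ alone already suffice.

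It remains to confirm that this really is an \emph{infinite} union. Since distinct values of $r$ give progressions $P_r$ with distinct common differences $6r$, the $P_r$ are pairwise distinct as $r$ ranges over $\Omega(3,1)$. As $\Omega(3,1)$ is infinite --- established in \cite{mosteig} --- the indexing set is infinite, and therefore the union above is a genuinely infinite union of arithmetic progressions, as claimed.

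I expect the only real subtlety here to be conceptual rather than computational: almost all of the difficulty has been absorbed into Corollary~\ref{31congr6r}, so the present argument reduces to checking the two set inclusions and the distinctness of the progressions. What this argument does \emph{not} claim --- and what the $40, 136, \dots, 520$ versus $616$ gap suggests cannot hold --- is that $\Omega(3,1)$ is a \emph{finite} union of congruence classes; the infinitude of the union is essential and is not merely an artifact of a wasteful construction.
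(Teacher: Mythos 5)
Your proof is correct and takes essentially the same route as the paper, whose one-line argument likewise reads the result directly off Corollary~\ref{31congr6r} by observing that each $(3,1)$-distinguished $r$ lies in the progression $(r+6rm)$. Your explicit verification of the two set inclusions and of the infinitude of the union (via the distinct common differences $6r$ and the infinitude of $\Omega(3,1)$ from \cite{mosteig}) merely spells out details the paper leaves implicit.
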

\begin{proof}
By Corollary~\ref{31congr6r} every $(3,1)$-distinguished integer $r$
lies in the arithmetic progression $(r+6rm)_{m=1}^\infty$.
\end{proof}

This corollary answers our initial question, but as our difficulty
with $10$ and $14$ show, the answer is not entirely satisfactory.
For completeness, we will justify that all positive integers
congruent to 10 modulo 24 are $(3,1)$-distinguished.  The proof for
14 is similar.

\begin{prop}\label{10mod24}
Suppose $r \equiv 10 \mod 24$ where $r>0$.  Then $r$ is
$(3,1)$-distinguished.
\end{prop}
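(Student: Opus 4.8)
The plan is to reduce the claim, via Lemma~\ref{dist3}, to a statement about the parity of a multiplicative order, and then to force that parity using quadratic reciprocity. Writing $r = 2k$, the hypothesis $r \equiv 10 \pmod{24}$ gives $k = r/2 \equiv 5 \pmod{12}$; in particular $k$ is odd and $k \equiv 2 \pmod 3$, so $\gcd(k,6) = 1$. Thus $r = 2k$ falls into the first of the two special cases of Lemma~\ref{dist3}, and $r$ is $(3,1)$-distinguished if and only if $\ord{k}{3}$ is even. So it suffices to prove that $\ord{k}{3}$ is even whenever $k \equiv 5 \pmod{12}$.

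First I would reduce the parity question to a single prime factor. If $p$ is any prime with $p \mid k$, then $\ord{p}{3} \mid \ord{k}{3}$, so it is enough to exhibit one prime factor $p$ of $k$ with $\ord{p}{3}$ even. The key elementary observation is that if $\ord{p}{3}$ were odd, say equal to $d$, then $d+1$ is even and $3 \equiv 3^{d+1} = \bigl(3^{(d+1)/2}\bigr)^2 \pmod p$, so $3$ would be a quadratic residue mod $p$. Contrapositively, if $3$ is a quadratic non-residue mod $p$, then $\ord{p}{3}$ is even. Hence the entire problem comes down to showing that every $k \equiv 5 \pmod{12}$ has a prime factor modulo which $3$ is a non-residue.

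Next I would identify those primes using quadratic reciprocity. For an odd prime $p \neq 3$, reciprocity together with the supplementary law gives $\jac{3}{p} = \jac{p}{3}(-1)^{(p-1)/2}$, and unwinding the two factors according to the residue of $p$ modulo $3$ and modulo $4$ shows that $\jac{3}{p} = -1$ exactly when $p \equiv 5$ or $7 \pmod{12}$, while $\jac{3}{p} = 1$ exactly when $p \equiv \pm 1 \pmod{12}$. Since $\gcd(k,6)=1$, every prime factor of $k$ lies in one of the residue classes $\pm 1, 5, 7 \pmod{12}$. The residues $\{1, 11\} = \{\pm 1\}$ form a subgroup of $(\Z/12\Z)^\times$, so any product of primes all lying in $\{\pm 1\}$ is again $\equiv \pm 1 \pmod{12}$. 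As $k \equiv 5 \pmod{12}$ is not $\equiv \pm 1$, at least one prime factor $p$ of $k$ must satisfy $p \equiv 5$ or $7 \pmod{12}$, whence $\jac{3}{p} = -1$, $\ord{p}{3}$ is even, and therefore $\ord{k}{3}$ is even.

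I expect the main obstacle to be conceptual rather than computational: recognizing that the parity of $\ord{k}{3}$ need not be controlled by $k$ as a whole but can be pinned down from the existence of a single ``bad'' prime factor, and that the multiplicative structure of $(\Z/12\Z)^\times$ guarantees such a factor whenever $k \equiv 5 \pmod{12}$. The quadratic reciprocity computation itself is routine, and the same argument, with $k \equiv 7 \pmod{12}$ in place of $k \equiv 5$, handles the $14$ modulo $24$ case.
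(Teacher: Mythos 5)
Your proof is correct, and it takes a genuinely different route from the paper's, even though both ultimately rest on quadratic reciprocity. The two arguments share the opening reduction: you invoke the case $r=2k$, $\gcd(k,6)=1$ of Lemma~\ref{dist3}, while the paper runs the same computation directly from Lemma~\ref{ord31reduction}; either way the claim becomes that $\ord{k}{3}$ is even for $k\equiv 5\pmod{12}$. From there the paper stays at the composite modulus: since $3(2+4m)\equiv 1\pmod{5+12m}$, an odd order for $3$ would make $3^{-1}\equiv 2+4m$ a square modulo $5+12m$, and a single Jacobi-symbol computation $\jac{2+4m}{5+12m}=-1$ rules that out, with no factoring needed. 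You instead localize the obstruction at a prime: from the standard evaluation $\jac{3}{p}=-1$ exactly when $p\equiv 5,7\pmod{12}$, together with the observation that $\{\pm1\}$ is a subgroup of $(\Z/12\Z)^\times$ so that a product of primes all congruent to $\pm1\pmod{12}$ cannot be $\equiv 5$, you extract a prime factor $p$ of $k$ at which $3$ is a non-residue, whence $\ord{p}{3}$ is even and divides $\ord{k}{3}$. (Your elementary step that odd order forces $3\equiv(3^{(d+1)/2})^2$ to be a residue is sound, as is your implicit use of only the valid direction, namely that symbol $-1$ implies non-residue --- the paper needs the same care.) The approaches are close cousins: the multiplicativity of the Jacobi symbol is precisely what lets the paper's composite computation detect your ``bad prime'' without naming it. What yours buys is transparency and uniformity --- the $14\bmod 24$ case ($k\equiv 7\pmod{12}$) follows by the identical sentence, as you note, whereas the paper must redo its computation --- while the paper's version buys economy, working entirely at the given modulus with one symbol evaluation and no structural discussion of prime factors. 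Both proofs fail at $40\bmod 96$ for the same underlying reason the paper remarks on: there one needs $4\mid\ord{k}{3}$, which quadratic (parity) information cannot supply.
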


\begin{proof}
By Lemma~\ref{ord31reduction} we need to show that
$\ord{r}{3}=\ord{2r}{3}$.  Writing $r = 10+24k$ where $k$ is a
nonnegative integer, this corresponds to showing
$$
\ord{10+24k}{3}=\lcm(\ord{2}{3},\ord{5+12k}{3})=\ord{5+12k}{3}
$$
is equal to
$$
\ord{20+48k}{3}=\lcm(\ord{4}{3},\ord{5+12k}{3})=\lcm(2,\ord{5+12k}{3}).
$$
This follows if and only if $\ord{5+12k}{3}$ is even.  Consequently,
we simply need to show that $3^{2n+1}\not\equiv 1\mod{5+12k}$ for
any $n$.  As $3(2+4k)\equiv 1 \mod{5+12k}$, this corresponds to
showing that $3^{2n}=(3^n)^2\not\equiv 2+4k \mod{5+12k}$ for any
$n$.  Thus the result follows if we show that $2+4k$ is not a square
modulo $5+12k$. We turn to quadratic reciprocity for this result.
 Recall that if $a$ is a square mod $b$, then the Jacobi symbol $\left( \frac{a}{b}
 \right) = 1$.  Using the algebra of Jacobi symbols (see \cite{grossw}, for
example),
\begin{eqnarray*}
\jac{2+4k}{5+12k} & = & \jac{2}{5+12k}\jac{1+2k}{5+12k}\\
  & = & (-1)^{((5+12k)^2-1)/8}\jac{1+2k}{5+12k} \\
  & = & (-1)^{k+1}\jac{1+2k}{5+12k},
\end{eqnarray*}
and
\begin{eqnarray*}
\jac{5+12k}{1+2k} & = & \jac{-1}{1+2k} \\
& = & (-1)^k.
\end{eqnarray*}
By quadratic reciprocity,
\begin{eqnarray*}
\jac{1+2k}{5+12k}\jac{5+12k}{1+2k}&=&(-1)^{((1+2k)-1)((5+12k)-1)/4}
\\
 & = & 1.
\end{eqnarray*}
Putting these together we obtain
$$
\jac{2+4k}{5+12k} = (-1)^{k+1}(-1)^k=-1.
$$
implying that $2+4k$ is not a square modulo $5+12k$.  Consequently,
 $\ord{5+12k}{3}$ is even as desired.  Hence $r$ is
$(3,1)$-distinguished.
\end{proof}

We note that trying to employ a similar argument for $40$ modulo
$96$, one runs into the problem of trying to show that $(2+4k)^2$ is
not a square modulo $5+12k$, which is clearly ridiculous.

\section{Distinguished with Respect to $(s,t)$} \label{st} \setcounter{equation}{0}

In this section we analyze the general case.  As we shall see, the
 $(s,t)$ case is more complicated than the $(3,1)$ case, in
part because $s-1$ can be composite.  This leads to potential
difficulties in calculating $\ord{r}{s}$.  On the bright side,
however, allowing $t\ne 1$ can sometimes make it easier for a number
$r$ to be distinguished.  We begin this section by examining the
role of $t$.

\begin{lemma}\label{lem:distdescent}
Let $r \ge 2$ be relatively prime to $s$. If $r$ is distinguished
with respect to $(s,t)$, then $r$ is distinguished with respect to
$(s,\gcd(t,s-1))$.
\end{lemma}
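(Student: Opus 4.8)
The plan is to reduce the claim to one elementary fact about greatest common divisors. Write $d = \ord{r}{s}$ and note that $d$ depends only on $r$ and $s$, not on $t$; set $N = (s^d - 1)/(s-1)$, so that $\beta_{s,t}(d) = tN$ and, more generally, $\beta_{s,c}(d) = cN$ for any integer $c$. In this notation the hypothesis that $r$ is distinguished with respect to $(s,t)$ is exactly the divisibility $r \mid tN$, and the conclusion I must reach is $r \mid gN$, where $g = \gcd(t,s-1)$. Since $\gcd(r,s) = 1$ is assumed and is unaffected by replacing $t$ with $g$, only this divisibility needs to be established.

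First I would extract a second divisibility that is automatic from the definition of order: because $d = \ord{r}{s}$ we have $s^d \equiv 1 \pmod r$, i.e.\ $r \mid s^d - 1$, and since $(s-1)N = s^d - 1$ this says precisely $r \mid (s-1)N$. Thus, alongside the hypothesis $r \mid tN$, I obtain $r \mid (s-1)N$ for free.

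Next I would combine the two. A common divisor of two integers divides their gcd, so $r \mid \gcd(tN,(s-1)N)$, and the identity $\gcd(aN,bN) = N\gcd(a,b)$ yields $\gcd(tN,(s-1)N) = N\gcd(t,s-1) = gN$. Hence $r \mid gN$, which together with $\gcd(r,s)=1$ is exactly the statement that $r$ is distinguished with respect to $(s,\gcd(t,s-1))$.

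I do not expect a genuine obstacle here; the whole point is to notice that the defining property of $\ord{r}{s}$ hands over the divisibility $r \mid (s-1)N$ to sit beside the hypothesis $r \mid tN$, after which factoring $N$ out of the gcd finishes everything. For readers who prefer a local argument, the same conclusion drops out prime by prime: for each prime $p \mid r$ with $v_p(r) = a$, the two divisibilities give $v_p(t) + v_p(N) \ge a$ and $v_p(s-1) + v_p(N) \ge a$, so $v_p(gN) = \min(v_p(t), v_p(s-1)) + v_p(N) \ge a$, and assembling these over all $p$ recovers $r \mid gN$.
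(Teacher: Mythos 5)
Your proof is correct and is essentially the paper's own argument: both set $N=(s^{\ord{r}{s}}-1)/(s-1)$, observe that the definition of order gives $r \mid (s-1)N$ alongside the hypothesis $r \mid tN$, and conclude $r \mid \gcd(t,s-1)\,N$. The only difference is cosmetic — you spell out the final combination step (via $\gcd(tN,(s-1)N)=N\gcd(t,s-1)$, or valuations) that the paper leaves implicit with a ``Thus.''
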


\begin{proof}
Note that $s^{\ord{r}{s}}-1 = (s-1)(1 + s + \cdots +
s^{\ord{r}{s}-1})$, and so $r \mid (s-1)(1 + s + \cdots +
s^{\ord{r}{s}-1})$. Since $r$ is distinguished with respect to
$(s,t)$, it follows that $r \mid t (1 + s + \cdots +
s^{\ord{r}{s}-1})$.  Thus, $r \mid \gcd(t,s-1) (1 + s + \cdots +
s^{\ord{r}{s}-1})$. \end{proof}

We now generalize Lemma~\ref{ord31reduction}.

\begin{prop} \label{prop:ordreduction}
Let $r \ge 2$ be relatively prime to $s$. Then $r$ is distinguished
with respect to $(s,t)$ if and only if $\ord{r}{s} = \ord{gr}{s}$
where \ba\label{def:g} g = \frac{s-1}{\gcd(s-1,t)}.\ea
\end{prop}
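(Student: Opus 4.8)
The plan is to translate both sides of the claimed equivalence into a single divisibility statement about $r$. Write $k = \ord{r}{s}$, let $d = \gcd(s-1,t)$, and set $B = 1 + s + \cdots + s^{k-1} = (s^k-1)/(s-1)$, so that by definition $r$ is distinguished with respect to $(s,t)$ precisely when $r \mid \beta_{s,t}(k) = tB$. Note that $s-1 = dg$, and since $\gcd(s,s-1)=1$ we have $\gcd(s,g)=1$; combined with $\gcd(s,r)=1$ this shows $\gcd(s,gr)=1$, so $\ord{gr}{s}$ is in fact well-defined.

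First I would reduce the order equation $\ord{r}{s}=\ord{gr}{s}$ to a divisibility. Because $r \mid gr$, any $m$ with $gr \mid s^m-1$ also satisfies $r \mid s^m-1$, forcing $k \mid m$; hence $\ord{gr}{s}$ is a positive multiple of $k$, and therefore $\ord{r}{s}=\ord{gr}{s}$ if and only if $gr \mid s^k - 1$. Using $s^k-1=(s-1)B = gdB$ and cancelling the common factor $g$, this last condition is equivalent to $r \mid dB$. Thus the proposition reduces to showing that $r \mid tB$ if and only if $r \mid dB$.

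The easy (``if'') direction is immediate: since $d=\gcd(s-1,t)$ divides $t$, the relation $r \mid dB$ yields $r \mid tB$. The (``only if'') direction is where the real content lies, and here I would invoke Lemma~\ref{lem:distdescent}: if $r$ is distinguished with respect to $(s,t)$, then $r$ is distinguished with respect to $(s,\gcd(t,s-1))=(s,d)$, which by definition means $r \mid \beta_{s,d}(k)=dB$. Combining the two directions with the reduction of the previous paragraph then completes the proof.

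I expect the forward implication to be the main obstacle, and it is exactly what Lemma~\ref{lem:distdescent} is designed to overcome: a direct attempt to deduce $r \mid dB$ from $r \mid tB$ is awkward because $r$ need not be coprime to $t/d$, so one cannot simply strip off the extra factor. Routing through distinguishedness with respect to the reduced parameter $d$ sidesteps this entirely, leaving only the elementary bookkeeping with orders and the factorization $s^k-1=(s-1)B$.
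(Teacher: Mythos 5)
Your proposal is correct and takes essentially the same route as the paper's proof: both directions are reduced to the single divisibility $gr \mid s^{\ord{r}{s}}-1$, and the forward implication is supplied by Lemma~\ref{lem:distdescent} exactly as the paper does. Your repackaging of the argument as one chain of equivalences (with the observation that $\ord{r}{s}$ always divides $\ord{gr}{s}$ made explicit, where the paper just notes the reverse inequality always holds) is a purely cosmetic difference.
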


\begin{proof}
If  $r$ is distinguished with respect to $(s,t)$, then by Lemma
\ref{lem:distdescent}, $r$ is distinguished with respect to $(s,
\gcd(t,s-1))$, and so
   $(s-1)r \mid \gcd(s-1,t)
(s^\ord{r}{s}-1)$. From this, we see
 $gr \mid s^\ord{r}{s}-1$, and so
$s^\ord{r}{s} \equiv 1 \mod gr$.
 Thus $\ord{r}{s} \ge \ord{gr}{s}$, and since the
reverse inequality always holds,  $\ord{r}{s} = \ord{gr}{s}$.

Conversely, suppose $\ord{r}{s} = \ord{gr}{s}$.  Since
$s^\ord{gr}{s} \equiv 1 \mod gr,$ it follows that $s^\ord{r}{s}
\equiv 1 \mod gr,$ and so $gr \mid s^\ord{r}{s} -1$.  From this, it
follows that $r \mid \gcd(s-1,t) (s^\ord{r}{s} - 1)/(s-1)$, and so
$r \mid t (s^\ord{r}{s} -1)/(s-1)$.
\end{proof}

We now have the following corollary:

\begin{cor}
If $\gcd(s-1,t) \mid \gcd(s-1,t')$ (in particular, if $ t \mid t'$),
then $r$ is distinguished with respect to $(s,t')$ whenever $r$ is
distinguished with respect to $(s,t)$.
\end{cor}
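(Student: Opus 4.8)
The plan is to reduce the statement to the order criterion of Proposition~\ref{prop:ordreduction} and then exploit the elementary monotonicity of multiplicative order under divisibility of the modulus. Set $d = \gcd(s-1,t)$ and $d' = \gcd(s-1,t')$, and define $g = (s-1)/d$ and $g' = (s-1)/d'$, so that $g$ and $g'$ are the quantities attached to $t$ and $t'$ by Proposition~\ref{prop:ordreduction}. First I would dispose of the parenthetical claim: if $t \mid t'$, then $\gcd(s-1,t)$ divides both $s-1$ and $t'$ and hence divides $\gcd(s-1,t')$, so $d \mid d'$; thus it suffices to argue under the hypothesis $d \mid d'$.

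Next I would translate $d \mid d'$ into a relation between the auxiliary moduli. Since $g/g' = d'/d$, the hypothesis $d \mid d'$ shows that $g'$ divides $g$. Both $g$ and $g'$ are divisors of $s-1$ and are therefore relatively prime to $s$; combined with $\gcd(r,s) = 1$, which holds because $r$ is distinguished with respect to $(s,t)$, this guarantees that the orders $\ord{r}{s}$, $\ord{g'r}{s}$, and $\ord{gr}{s}$ are all well-defined.

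The key step is the chain of divisibilities $r \mid g'r \mid gr$. Because the order of $s$ modulo a divisor divides the order of $s$ modulo the larger modulus, this yields
$$\ord{r}{s} \mid \ord{g'r}{s} \mid \ord{gr}{s}.$$
By Proposition~\ref{prop:ordreduction}, the assumption that $r$ is distinguished with respect to $(s,t)$ is precisely the equality $\ord{r}{s} = \ord{gr}{s}$. This forces the outer two terms of the chain to agree, which squeezes the middle term into equality as well, giving $\ord{r}{s} = \ord{g'r}{s}$. Applying Proposition~\ref{prop:ordreduction} a second time, this equality says exactly that $r$ is distinguished with respect to $(s,t')$.

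I do not expect a serious obstacle: once Proposition~\ref{prop:ordreduction} is in hand, the entire content lies in converting the divisibility hypothesis on $t$ and $t'$ into the nested divisibility $g' \mid g$ of the auxiliary moduli, after which the squeeze on orders is automatic. The only points deserving care are checking that $g'$ truly divides $g$ (not merely that $g' \le g$) and confirming the coprimality conditions that make all three orders legitimate.
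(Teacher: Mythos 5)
Your proof is correct and follows the paper's intended route: the corollary is stated immediately after Proposition~\ref{prop:ordreduction} with no separate argument, and the natural deduction is exactly your observation that $d \mid d'$ gives $g' \mid g$, whence $\ord{r}{s} \mid \ord{g'r}{s} \mid \ord{gr}{s}$ collapses to equality when $r$ is $(s,t)$-distinguished. Your care about $g' \mid g$ (rather than $g' \le g$) and about coprimality of $g$, $g'$, and $r$ to $s$ is exactly right, and nothing is missing.
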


In the $(3,1)$ case, we were fortunate that $s-1$ was prime and
$g=1$, which simplified our work.  We now turn to generalizing the
second part of Lemma~\ref{dist3}, and afterwards, we shall then
generalize its first part.

\begin{prop} \label{prop:orddiv}
Let $p_1,\dots,p_n$ be the prime divisors of $g=p_1^{j_1}\dots
p_n^{j_n}$. For $r\in\mathbb{Z}$ with $r=p_1^{m_1}\dots p_n^{m_n} k$
with $\gcd(k,g)=1$.  If $\ord{p_i^{m_i+j_i}}{s}$ divides
$\ord{r}{s}$ for $i=1,\dots,n$, then $r$ is $(s,t)$-distinguished.
In particular, if $\ord{p_i^{m_i+j_i}}{s}$ divides $\ord{k}{s}$,
then $r$ (and $k$) are $(s,t)$-distinguished.
\end{prop}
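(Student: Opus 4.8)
The plan is to reduce everything to Proposition~\ref{prop:ordreduction}, which tells us that $r$ is $(s,t)$-distinguished precisely when $\ord{r}{s}=\ord{gr}{s}$. So I would aim to verify this equality of orders under the stated divisibility hypotheses, working throughout under the standing assumption $\gcd(r,s)=1$ that makes all the orders defined. First I would record the direction that costs nothing: since $r\mid gr$, any exponent $a$ with $s^a\equiv 1 \bmod gr$ also satisfies $s^a\equiv 1 \bmod r$, whence $\ord{r}{s}\mid\ord{gr}{s}$ always holds. Thus it remains only to prove the reverse divisibility $\ord{gr}{s}\mid\ord{r}{s}$.

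For that, I would exploit the multiplicativity of the order across coprime moduli (the same fact used repeatedly in Section~\ref{31}). Writing $gr=p_1^{m_1+j_1}\cdots p_n^{m_n+j_n}\,k$ and noting that the factors $p_1^{m_1+j_1},\dots,p_n^{m_n+j_n},k$ are pairwise coprime — here $\gcd(k,g)=1$ guarantees each $p_i\nmid k$ — the lcm formula for orders gives
\[
\ord{gr}{s}=\lcm\!\bigl(\ord{p_1^{m_1+j_1}}{s},\dots,\ord{p_n^{m_n+j_n}}{s},\ord{k}{s}\bigr).
\]
Now by hypothesis each $\ord{p_i^{m_i+j_i}}{s}$ divides $\ord{r}{s}$, and since $k\mid r$ the order $\ord{k}{s}$ divides $\ord{r}{s}$ as well. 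A least common multiple of integers each dividing $\ord{r}{s}$ again divides $\ord{r}{s}$, so $\ord{gr}{s}\mid\ord{r}{s}$. Combined with the free direction this yields $\ord{r}{s}=\ord{gr}{s}$, and Proposition~\ref{prop:ordreduction} finishes the main claim.

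For the \emph{in particular} clause I would simply observe that the stated hypothesis $\ord{p_i^{m_i+j_i}}{s}\mid\ord{k}{s}$ implies the main hypothesis, because $k\mid r$ forces $\ord{k}{s}\mid\ord{r}{s}$, and hence $\ord{p_i^{m_i+j_i}}{s}\mid\ord{r}{s}$, so $r$ is distinguished. That $k$ itself is distinguished follows by applying the first part with $k$ in the role of $r$: here the $p_i$-exponents are all $0$, so the required conditions become $\ord{p_i^{j_i}}{s}\mid\ord{k}{s}$, which hold since $\ord{p_i^{j_i}}{s}\mid\ord{p_i^{m_i+j_i}}{s}$ (orders modulo a higher prime power are divisible by orders modulo a lower one).

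This argument is essentially routine once Proposition~\ref{prop:ordreduction} is in hand, so there is no serious obstacle. The only points demanding care are the bookkeeping of the prime factorization of $gr$ and the pairwise coprimality of its pieces — which is exactly where $\gcd(k,g)=1$ enters — together with the standing assumption $\gcd(r,s)=1$ needed for the orders and the lcm formula to make sense.
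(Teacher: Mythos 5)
Your proof is correct and follows essentially the same route as the paper's: expand $\ord{gr}{s}$ as $\lcm\bigl(\ord{p_1^{m_1+j_1}}{s},\dots,\ord{p_n^{m_n+j_n}}{s},\ord{k}{s}\bigr)$, note each piece divides $\ord{r}{s}$, invoke the automatic divisibility $\ord{r}{s}\mid\ord{gr}{s}$, and conclude via Proposition~\ref{prop:ordreduction}. If anything, yours is slightly more careful than the paper's (which writes $\ord{gk}{s}$ where $\ord{gr}{s}$ is meant and uses ``$\le$'' where divisibility is intended), and you spell out the ``in particular'' clause, including why $k$ itself is distinguished, which the paper leaves implicit.
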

\begin{proof}
Given $r$ as above, we calculate $\ord{gr}{s}$.  Using prime
factorizations, we have
\begin{eqnarray*}
\ord{gk}{s} & = &
\lcm(\ord{p_1^{m_1+j_1}}{s},\dots,\ord{p_n^{m_n+j_n}}{s},\ord{k}{s}).
\end{eqnarray*}
Since $k|r$ implies $\ord{k}{s}|\ord{r}{s}$, it follows that
$\ord{gr}{s}\le\ord{r}{s}$.  However, since the latter divides the
former, we must have $\ord{gr}{s}=\ord{r}{s}$.  Consequently by
Proposition~\ref{prop:ordreduction}, $r$ is $(s,t)$-distinguished.
\end{proof}

There are now two basic possibilities for how $r$ can be
$(s,t)$-distinguished. First, if
$\ord{p_i^{m_i+j_i}}{s}=\ord{p_i^{m_i}}{s}$ then $p_i$ imposes no
restriction on $k$. This is what happened in the $4k$ case for
$(3,1)$ as $\ord{4}{3}=\ord{8}{3}$ (it also occurs in the $1$ and
$5$ modulo $6$ cases). Alternatively, if
$\ord{p_i^{m_i+j_i}}{s}>\ord{p_i^{m_i}}{s}$ then it is necessary
that $\ord{r/p_i^{m_i}}{s}$ is a multiple of
$\ord{p_i^{m_i+j_i}}{s}$.  In the $(3,1)$ case, this reduced to
$\ord{k}{3}$ (in the $r=2k$ case), as $s-1=2$.

Note that if $k$ is relatively prime to $gs$, then $k$ is
necessarily $(s,t)$-distinguished since
$\ord{gk}{s}=\lcm(\ord{g}{s},\ord{k}{s})$, and $g|(s-1)$ implies
$\ord{g}{s}=1$.  In the remainder of this section, we analyze what
multiples of $k$ are $(s,t)$-distinguished in this case.  The
following lemma, which follows from Proposition~\ref{prop:orddiv},
allows us to reduce to considering $\ord{p^{\ell}}{s}$ and its
relationship to $\ord{k}{s}$.

\begin{lemma} \label{primedivide}
Let $g=p_1^{j_1}\dots p_n^{j_n}$ be defined as in
Proposition~\ref{prop:ordreduction} with each $p_i$ prime,
$\bar{g}=p_1\dots p_n$  and let $r=p_1^{t_1}\dots p_n^{t_n} k$ with
$\gcd(gs,k)=1$. Then $r$ is distinguished if and only if for each
$i=1,\dots,n$ either
\begin{enumerate}
\item{} $\ord{p_i^{j_i+t_i}}{s} = \ord{p_i^{t_i}}{s}$ or
\item{} $\ord{p_i^{j_i+t_i}}{s}$ divides $\ord{k}{s}$.
\end{enumerate}
\end{lemma}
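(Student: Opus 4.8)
The plan is to reduce the statement to Proposition~\ref{prop:ordreduction} and then exploit the fact that, because $g$ divides $s-1$, each order $\ord{p_i^\ell}{s}$ is forced to be a pure power of $p_i$. First note that $\gcd(gs,k)=1$ together with $p_i \mid g \mid s-1$ gives $\gcd(r,s)=1$, so all the orders below are defined and Proposition~\ref{prop:ordreduction} applies. Write $c_i = \ord{p_i^{t_i}}{s}$, $d_i = \ord{p_i^{j_i+t_i}}{s}$, and $A = \ord{k}{s}$. Since the moduli $p_1^{t_1},\dots,p_n^{t_n},k$ are pairwise relatively prime, and likewise $p_1^{j_1+t_1},\dots,p_n^{j_n+t_n},k$, the multiplicativity of the order recalled after Lemma~\ref{ord31reduction} yields $\ord{r}{s} = \lcm(c_1,\dots,c_n,A)$ and $\ord{gr}{s} = \lcm(d_1,\dots,d_n,A)$. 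By Proposition~\ref{prop:ordreduction}, $r$ is $(s,t)$-distinguished exactly when these two least common multiples coincide. As $p_i^{t_i}\mid p_i^{j_i+t_i}$ forces $c_i \mid d_i$, the first lcm always divides the second, so equality holds if and only if $d_i \mid \ord{r}{s} = \lcm(c_1,\dots,c_n,A)$ for every $i$.

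The key observation, and the step I expect to be the crux, is that $g \mid s-1$ by the definition (\ref{def:g}) of $g$. Hence every prime $p_i \mid g$ satisfies $s \equiv 1 \pmod{p_i}$, so $s$ lies in the subgroup of units congruent to $1$ modulo $p_i$, which has order $p_i^{\ell-1}$ inside $(\Z/p_i^\ell)^\times$. Therefore $\ord{p_i^\ell}{s}$ divides $p_i^{\ell-1}$ and is in particular a power of $p_i$, for every $\ell$ and every $i$. This is precisely what makes the cross-terms harmless: each $c_i = p_i^{a_i}$ and each $d_i = p_i^{b_i}$ (with $a_i \le b_i$) is a power of $p_i$, and for $m \ne i$ the number $c_m$ is a power of $p_m$ and so contributes nothing to the power of $p_i$ dividing $\lcm(c_1,\dots,c_n,A)$. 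Without this remark one would have to worry that some $c_m$ with $m\ne i$ might supply the $p_i$-part needed for $d_i$ to divide the lcm; the congruence $s\equiv 1 \pmod{p_i}$ rules this out.

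It then remains to check both directions against the criterion ``$d_i \mid \ord{r}{s}$ for all $i$.'' For the reverse implication, suppose that for each $i$ either $d_i = c_i$ or $d_i \mid A$. In the first case $d_i = c_i$ divides $\ord{r}{s}$ because $p_i^{t_i}\mid r$; in the second, $d_i \mid A = \ord{k}{s}$ divides $\ord{r}{s}$ because $k \mid r$. Thus $d_i \mid \ord{r}{s}$ for all $i$, and $r$ is $(s,t)$-distinguished by Proposition~\ref{prop:orddiv}. For the forward implication, assume $r$ is distinguished, fix $i$, and compare powers of $p_i$ in $d_i = p_i^{b_i}$ and $\lcm(c_1,\dots,c_n,A)$. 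By the previous paragraph the only terms carrying $p_i$ are $c_i = p_i^{a_i}$ and $A$, so $b_i \le \max(a_i,e_i)$, where $p_i^{e_i}$ is the exact power of $p_i$ dividing $A$. If $b_i \le a_i$ then, since $c_i \mid d_i$ gives $a_i \le b_i$, we get $b_i = a_i$ and hence $d_i = c_i$; otherwise $b_i > a_i$ forces $b_i \le e_i$, i.e.\ $d_i = p_i^{b_i} \mid A$. Either way the stated alternative holds for $i$, which completes the argument.
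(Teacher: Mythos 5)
Your proof is correct, and it is actually more complete than what the paper provides: the paper states this lemma with no proof beyond the remark that it ``follows from Proposition~\ref{prop:orddiv},'' and that proposition by itself only yields the sufficiency half (if each $\ord{p_i^{j_i+t_i}}{s}$ divides $\ord{r}{s}$, then $r$ is distinguished). Your skeleton is the one the authors intend --- reduce to $\ord{r}{s}=\ord{gr}{s}$ via Proposition~\ref{prop:ordreduction} and expand both sides as least common multiples over the coprime factorizations $r=p_1^{t_1}\cdots p_n^{t_n}k$ and $gr=p_1^{j_1+t_1}\cdots p_n^{j_n+t_n}k$ --- but the value you add is in the necessity direction. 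Your observation that $p_i \mid g \mid s-1$ forces $s\equiv 1 \pmod{p_i}$, so that each $\ord{p_i^{\ell}}{s}$ lies in the kernel of reduction and is therefore a pure power of $p_i$ dividing $p_i^{\ell-1}$, is exactly what is needed and is left implicit in the paper: it rules out the possibility that some cross-term $c_m$ with $m\ne i$ supplies the $p_i$-part of the lcm, without which the ``only if'' direction would not follow. The valuation comparison $b_i\le\max(a_i,e_i)$, split according to whether $b_i\le a_i$ or $b_i>a_i$, then cleanly produces the dichotomy between alternatives (1) and (2). One small simplification: in the reverse direction you do not need to invoke Proposition~\ref{prop:orddiv} at all, since you have already shown $d_i\mid\lcm(c_1,\dots,c_n,A)$ in both cases, which gives equality of the two lcms and lets Proposition~\ref{prop:ordreduction} finish directly.
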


Similar to what was done in the $(3,1)$ case,
Proposition~\ref{prop:orddiv} allows us to create congruence classes
of $(s,t)$-distinguished positive integers.

\begin{prop}\label{stprogression}
Suppose $r$ is $(s,t)$-distinguished with $g$ as defined in equation
(\ref{def:g}). Let $g=p_1^{j_1}\dots p_n^{j_n}$ be the prime
factorization of $g$, and set $\bar{g}=p_1\dots p_n$.  Suppose
$$
r'\equiv br \mod\bar{g}rs
$$
with $\gcd(b,\bar{g}s)=1$, then $r'$ is $(s,t)$-distinguished.
\end{prop}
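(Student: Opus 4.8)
The plan is to mirror the proof of Corollary~\ref{31congr6r}, with the single prime $2$ there (where $g=\bar g=2$, $s=3$) replaced by the full prime factorization of $g$, and with Proposition~\ref{prop:ordreduction} used in place of Lemma~\ref{ord31reduction}. Since $r$ is $(s,t)$-distinguished we know $\ord{r}{s}=\ord{gr}{s}$, and by Proposition~\ref{prop:ordreduction} it suffices to prove the analogous equality $\ord{r'}{s}=\ord{gr'}{s}$.

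First I would recast the congruence multiplicatively: from $r'\equiv br \pmod{\bar g r s}$ write $r'=rc$ where $c=b+\bar g s m$ for some integer $m$ (we take $r'>0$, so $c>0$). Because $c\equiv b\pmod{\bar g s}$ and $\gcd(b,\bar g s)=1$, we get $\gcd(c,\bar g s)=1$; in particular $c$ is relatively prime to $s$ and to each $p_i\mid g$. Note that $r$ is relatively prime to $s$ (being distinguished) and each $p_i\mid g\mid s-1$ is relatively prime to $s$, so $r'=rc$ is relatively prime to $s$ and every order appearing below is defined.

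Next I would factor out the primes of $g$. Writing $r=p_1^{m_1}\cdots p_n^{m_n}k$ with $\gcd(k,g)=1$, the fact that $c$ is coprime to each $p_i$ means multiplication by $c$ leaves the $p_i$-adic valuations unchanged, so $r'=p_1^{m_1}\cdots p_n^{m_n}k'$ with $k'=kc$ and $\gcd(k',g)=1$. Using multiplicativity of order across coprime factors, put
\[
X=\lcm(\ord{p_1^{m_1}}{s},\dots,\ord{p_n^{m_n}}{s}),\qquad
Y=\lcm(\ord{p_1^{m_1+j_1}}{s},\dots,\ord{p_n^{m_n+j_n}}{s}).
\]
Then $\ord{r}{s}=\lcm(X,\ord{k}{s})$, $\ord{gr}{s}=\lcm(Y,\ord{k}{s})$, and likewise $\ord{r'}{s}=\lcm(X,\ord{k'}{s})$ and $\ord{gr'}{s}=\lcm(Y,\ord{k'}{s})$; crucially $X$ and $Y$ are the same for $r$ and $r'$ because the $p_i$-powers are unchanged.

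Finally I would invoke the lcm cancellation rule already used in Corollary~\ref{31congr6r}. Since $k\mid k'$ we have $\ord{k}{s}\mid\ord{k'}{s}$, say $\ord{k'}{s}=a\,\ord{k}{s}$. Applying the rule ``$\lcm(x,z)=\lcm(y,z)\Rightarrow\lcm(x,za)=\lcm(y,za)$'' with $x=X$, $y=Y$, $z=\ord{k}{s}$ to the known equality $\lcm(X,\ord{k}{s})=\lcm(Y,\ord{k}{s})$ yields $\lcm(X,\ord{k'}{s})=\lcm(Y,\ord{k'}{s})$, that is, $\ord{r'}{s}=\ord{gr'}{s}$; Proposition~\ref{prop:ordreduction} then gives that $r'$ is $(s,t)$-distinguished. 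I do not expect a deep obstacle: the content lies in recognizing that the hypothesis $\gcd(b,\bar g s)=1$ is exactly what simultaneously preserves coprimality to $s$ and freezes every $p_i$-adic valuation, so that $X$ and $Y$ do not move and the cancellation applies verbatim. The only point requiring care is verifying $\gcd(k',g)=1$ and that the $p_i$-powers in $r'$ agree with those in $r$, since the cancellation is valid only when the $X$ and $Y$ pieces are held fixed.
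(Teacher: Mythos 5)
Your proof is correct and takes essentially the same route as the paper's: both write $r$ as its $g$-part times a factor coprime to $g$, observe that the multiplier $b+\bar{g}sc$ is coprime to $\bar{g}s$ and hence only enlarges the coprime factor while freezing the $p_i$-adic valuations, and then combine the $\lcm$ cancellation rule from Corollary~\ref{31congr6r} with the criterion of Proposition~\ref{prop:ordreduction}. The only cosmetic difference is that you expand the orders of the $g$-part into the explicit lcms $X$ and $Y$ over the prime powers $p_i^{m_i}$ and $p_i^{m_i+j_i}$, whereas the paper keeps them bundled as $\ord{\gamma}{s}$ and $\ord{g\gamma}{s}$ for a single factor $\gamma$.
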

\begin{proof}
Since $r$ is $(s,t)$-distinguished, by
Proposition~\ref{prop:ordreduction} we have
$$
\ord{gr}{s}=\ord{r}{s}.
$$
Suppose $r=\gamma k$ where $\gcd(k,g)=1$ and any prime dividing
$\gamma$ divides $g$. Since $r'=br+\bar{g}r\!sc=\gamma
k(b+\bar{g}sc)$ for some integer $c$ and
$\gcd(\gamma,k(b+\bar{g}sc))=1$, it follows that
$$
\ord{r'}{s}=\lcm(\ord{\gamma}{s},\ord{k(b+\bar{g}sc)}{s}).
$$
Similarly,
$$
\ord{gr'}{s}=\lcm(\ord{g\gamma}{s},\ord{k(b+\bar{g}sc)}{s}).
$$
By hypothesis, however,
$$
\lcm(\ord{\gamma}{s},\ord{k}{s})  =\ord{r}{s}=\ord{gr}{s}=
\lcm(\ord{g\gamma}{s},\ord{k}{s}).
$$
Since $\ord{k}{s}$ divides $\ord{k(b+\bar{g}sc)}{s}$, it follows
that the above implies
$$
\lcm(\ord{\gamma}{s},\ord{k(b+\bar{g}sc)}{s})=
\lcm(\ord{g\gamma}{s},\ord{k(b+\bar{g}sc)}{s})
$$
implying $\ord{r'}{s}=\ord{gr'}{s}$.
Proposition~\ref{prop:ordreduction} then implies $r'$ is
$(s,t)$-distinguished as desired.
\end{proof}

This allows us to generalize Corollary \ref{31progression} to the
$(s,t)$ case.

\begin{cor}
The set of all $(s,t)$-distinguished integers can be written as an
infinite union of arithmetic progressions.
\end{cor}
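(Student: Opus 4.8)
The plan is to imitate the proof of Corollary~\ref{31progression}, replacing the ad hoc modulus $6r$ with the modulus furnished by Proposition~\ref{stprogression}. Concretely, I would fix an arbitrary $(s,t)$-distinguished integer $r$ and exhibit a single arithmetic progression, starting at $r$ and lying entirely inside $\Omega(s,t)$; taking the union of these progressions over all distinguished $r$ then displays $\Omega(s,t)$ as a union of arithmetic progressions.

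First I would recall the precise hypothesis of Proposition~\ref{stprogression}: with $g = (s-1)/\gcd(s-1,t)$ factored as $g = p_1^{j_1}\cdots p_n^{j_n}$ and $\bar{g} = p_1\cdots p_n$, any $r'$ satisfying $r' \equiv b r \pmod{\bar{g} r s}$ with $\gcd(b,\bar{g}s)=1$ is again $(s,t)$-distinguished. The key observation is that the choice $b=1$ is always admissible, since $\gcd(1,\bar{g}s)=1$ trivially. Hence every integer in the progression $(r + \bar{g} r s\, m)_{m=0}^\infty$ is $(s,t)$-distinguished, and this is the single progression I would attach to $r$.

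Next I would assemble the whole set. Writing $\Omega(s,t) = \bigcup_{r \in \Omega(s,t)} \{\, r + \bar{g} r s\, m : m \ge 0 \,\}$, each set on the right is a genuine arithmetic progression contained in $\Omega(s,t)$ by the previous step, while every distinguished integer $r$ occurs as the initial term of its own progression; hence the union is exactly $\Omega(s,t)$. Since $\Omega(s,t)$ was shown in \cite{mosteig} to be infinite, the indexing set is infinite, so this expresses $\Omega(s,t)$ as an infinite union of arithmetic progressions, as claimed.

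I do not anticipate a genuine obstacle: once Proposition~\ref{stprogression} is in hand, the argument is essentially a bookkeeping step, exactly parallel to the $(3,1)$ case. The only point requiring a moment's care is the verification that $b=1$ satisfies the coprimality hypothesis, which is immediate. It is worth remarking, as with Corollary~\ref{31congr6r} feeding Corollary~\ref{31progression}, that the full strength of Proposition~\ref{stprogression} (permitting other residues $b$ coprime to $\bar{g}s$) produces additional progressions beyond the one used here, but these are not needed for the present statement.
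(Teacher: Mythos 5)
Your proof is correct and takes essentially the same approach as the paper, which derives the corollary from Proposition~\ref{stprogression} exactly as Corollary~\ref{31progression} was derived from Corollary~\ref{31congr6r}: each distinguished $r$ lies in the progression $(r+\bar{g}rsm)_m$, obtained by taking $b=1$. Your explicit verification that $b=1$ satisfies the coprimality hypothesis, and your starting the progression at $m=0$ so that $r$ itself is covered, are if anything slightly more careful than the paper's one-line argument.
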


Applying Proposition \ref{stprogression} to the case $r=10$ for
$(s,t)= (3,1)$, we obtain that every term in the arithmetic
progression $(10+60m)_{m=1}^\infty$ is $(3,1)$-distinguished.  We
note that this does not contain all $(3,1)$-distinguished arithmetic
progressions that include 10.  In particular, in Proposition
\ref{10mod24} we showed that the progression $(10+24m)_{m=1}^\infty$
is $(3,1)$-distinguished.  An interesting question is whether for a
given $\alpha$, one could determine all ``minimal" $\mu$ such that
the progression $(\alpha+\mu m)_{m=1}^\infty$ is
$(s,t)$-distinguished.

Proposition \ref{stprogression} generalize the argument that $40$
lies in an arithmetic sequence of $(3,1)$-distinguished integers.
The prime divisor $5$ played an important role in the argument for
40. Looking at the chart of $(3,1)$-distinguished integers, we see
that $5$, $10$, $20$, and $40$ are all $(3,1)$-distinguished, but
that $2^t5$ is not for any $t>3$. This seems curious.  Looking at
$7$, we note that $7$, $14$, and $28$ are distinguished, but again
$2^t7$ does not seem to be distinguished if $t>2$.  One might be
tempted to conjecture that for each prime $p>3$ there exists a $t_0$
such that $2^tp$ is $(3,1)$-distinguished for $t<t_0$ but is not
distinguished if $t\ge t_0$.  However, $11$ and $44$ are
$(3,1)$-distinguished, but $22$ is not.  In the remainder of the
section, we analyze this phenomenon.

We now generalize Proposition~\ref{ord2ell} and the first part of
Lemma~\ref{dist3}.

\begin{prop}  \label{lambdajump}
Let $p$ be a prime that does not divide $s$.  For $\ell \ge 1$,
define $\lambda(\ell,p,s) = \ord{p^{\ell+1}}{s}/\ord{p^{\ell}}{s}$.
Then we have the following.
\begin{enumerate}
\item[(a)] For $\ell \ge 1$, $\lambda(\ell,p,s) \mid p$.
\item[(b)] For $\ell \gg
0$, $\lambda(\ell,p,s) = p$.
\end{enumerate}
\end{prop}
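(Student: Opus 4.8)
The plan is to track the multiplicative order through the tower of prime powers by a single $p$-adic valuation. Write $d_\ell = \ord{p^\ell}{s}$, so that $\lambda(\ell,p,s) = d_{\ell+1}/d_\ell$. Since $s^{d_{\ell+1}} \equiv 1 \pmod{p^{\ell+1}}$ forces $s^{d_{\ell+1}} \equiv 1 \pmod{p^\ell}$, we have $d_\ell \mid d_{\ell+1}$, so this ratio is a positive integer. Throughout I will use $v_\ell := v_p(s^{d_\ell}-1)$, the exponent of the exact power of $p$ dividing $s^{d_\ell}-1$; by the definition of the order, $v_\ell \ge \ell$.

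For part (a), I would write $s^{d_\ell} = 1 + c\,p^\ell$ for some integer $c$ and expand
\begin{equation*}
s^{p d_\ell} = (1 + c\,p^\ell)^p = 1 + \sum_{j=1}^{p} \binom{p}{j} c^{\,j} p^{\,j\ell}.
\end{equation*}
The $j=1$ term is $c\,p^{\ell+1}$, and every term with $j \ge 2$ carries a factor $p^{j\ell}$ with $j\ell \ge 2\ell \ge \ell+1$; hence $s^{pd_\ell} \equiv 1 \pmod{p^{\ell+1}}$. Therefore $d_{\ell+1} \mid p\,d_\ell$, which together with $d_\ell \mid d_{\ell+1}$ gives $\lambda(\ell,p,s) \mid p$. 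As $p$ is prime this already shows $\lambda(\ell,p,s) \in \{1,p\}$, and the same computation yields the dichotomy I will use repeatedly: $\lambda(\ell,p,s) = 1$ exactly when $p^{\ell+1}\mid s^{d_\ell}-1$, i.e.\ when $v_\ell > \ell$, and $\lambda(\ell,p,s) = p$ exactly when $v_\ell = \ell$.

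For part (b) the strategy is a propagation argument: I will show that once $v_\ell = \ell$ for some $\ell \ge 2$, the equality $v_{\ell+1} = \ell+1$ follows, so that the value $p$ persists for all larger indices. Assuming $v_\ell = \ell$, the dichotomy gives $d_{\ell+1} = p\,d_\ell$, so $s^{d_{\ell+1}} = (1 + c\,p^\ell)^p$ with $p \nmid c$. Here the $j=1$ term $c\,p^{\ell+1}$ has valuation exactly $\ell+1$, and the claim is that every term with $j \ge 2$ has valuation at least $\ell+2$: for odd $p$ this uses $v_p\binom{p}{j}\ge 1$ for $2 \le j \le p-1$ together with $j\ell \ge 2\ell \ge \ell+2$, while the $j=p$ term has valuation $p\ell \ge 3\ell \ge \ell+2$; for $p=2$ the only surviving term is $c^2 2^{2\ell}$, whose valuation $2\ell$ is at least $\ell+2$ precisely because $\ell \ge 2$. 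Hence $v_{\ell+1} = \ell+1$, as claimed.

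It remains to supply a single starting index. Because $s \ge 2$ we have $s^m \ne 1$ for every $m \ge 1$, so $d_\ell \to \infty$; writing $d_\ell = d_1 \prod_{i=1}^{\ell-1}\lambda(i,p,s)$ with each factor in $\{1,p\}$, the factor $p$ must occur infinitely often, i.e.\ $v_\ell = \ell$ for infinitely many $\ell$. Choosing any such index $\ell^\ast \ge 2$ and applying the propagation step inductively yields $v_\ell = \ell$, hence $\lambda(\ell,p,s) = p$, for all $\ell \ge \ell^\ast$, which is exactly statement (b). The one delicate point is the valuation bookkeeping in the propagation step for $p=2$, where the quadratic term $c^2 2^{2\ell}$ is of the same order as the linear term unless $\ell \ge 2$; restricting to $\ell \ge 2$ (harmless, since we only need $\ell \gg 0$) makes the count uniform in $p$ and is what suppresses the small-$\ell$ anomaly already visible in Proposition~\ref{ord2ell}.
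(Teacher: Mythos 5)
Your proof is correct, and it differs from the paper's in a way worth noting. For part (a) the difference is cosmetic: you expand $(1+cp^\ell)^p$ by the binomial theorem, while the paper factors $s^{p\cdot\ord{p^\ell}{s}}-1=\bigl(s^{\ord{p^\ell}{s}}-1\bigr)\sum_{i=0}^{p-1}s^{i\cdot\ord{p^\ell}{s}}$ and observes that the geometric sum is divisible by $p$; the two computations are interchangeable. For part (b) your architecture is genuinely different. The paper introduces the deficiency $\delta_\ell=\nu_p\bigl(s^{\ord{p^\ell}{s}}-1\bigr)-\ell\ge0$, shows $\delta_\ell\ge\delta_{\ell+1}$ for $\ell>1$ (because the geometric sum has $p$-valuation at most $1$ once $\ell\ge2$), concludes that this non-increasing sequence of non-negative integers stabilizes, and then extracts $\lambda(\ell,p,s)=p$ from the stabilization. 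You instead prove a propagation statement --- once $v_\ell=\ell$ at some $\ell\ge2$, the exact valuation persists, so $\lambda$ stays equal to $p$ --- and seed it globally by noting that $\ord{p^\ell}{s}\to\infty$ forces the factor $p$ to occur infinitely often in $\prod_i\lambda(i,p,s)$, so a jump at some $\ell^\ast\ge2$ must exist. Notably, your propagation step is essentially the lemma the paper proves immediately \emph{after} this proposition (that $\lambda(\ell,p,s)=p$ for some $\ell\ge2$ implies $\lambda(m,p,s)=p$ for all $m\ge\ell$), so your route makes that later persistence lemma do double duty, deriving (b) from it plus unboundedness of the order, whereas the paper keeps the two results independent. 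What the monovariant buys the paper is a little quantitative control --- $\delta_\ell$ drops by exactly $1$ at each index with $\lambda=1$, so $\delta_2$ implicitly bounds the number of non-jumps and hence the stabilization point $\alpha_{s,p}$ --- while your version yields only the existence of $\ell^\ast$. On the other hand, your explicit isolation of the $p=2$, $\ell=1$ anomaly via the quadratic term $c^2 2^{2\ell}$ (valuation $2\ell\ge\ell+2$ precisely when $\ell\ge2$) makes the restriction $\ell>1$ more transparent than the paper's remark that the sum is not divisible by $p^2$ when $\ell>1$, and it connects cleanly to the small-modulus behavior in Proposition~\ref{ord2ell}. Two small points of hygiene: your claim $d_\ell\to\infty$ deserves the one-line justification that a bounded, non-decreasing $d_\ell$ would be eventually constant, forcing $p^\ell\mid s^d-1$ for a fixed $d$ and all $\ell$, which is impossible for $s\ge2$; and $s\ge2$ itself should be stated, since the proposition's hypothesis literally allows $s=1$, where (b) fails --- the paper's standing assumptions supply this, but your write-up leans on it.
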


\begin{proof}
Since by definition, $s^\ord{p^\ell}{s} \equiv 1 \mod p^\ell$, it
follows that for any non-negative integer $i$,
$s^{i\cdot\ord{p^\ell}{s}} \equiv 1 \mod p^\ell$, and so $p \mid
\sum_{i=0}^{p-1}s^{i\cdot\ord{p^\ell}{s}}$.  By definition, $p^\ell
\mid \left( s^{\ord{p^\ell}{s}} - 1\right)$, and since $s^{p \cdot
\ord{p^\ell}{s}} -1 = \left( s^{\ord{p^\ell}{s}} - 1\right)
\left(\sum_{i=0}^{p-1}s^{i\ord{p^\ell}{s}}\right)$, we have  $s^{p
\cdot\ord{p^{\ell}}{s}}{s} \equiv 1 \mod p^{\ell+1}$.  However, by
definition, $\ord{p^{\ell+1}}{s}$ is the smallest exponent such that
$s^\ord{p^{\ell+1}}{s} \equiv 1 \mod p^{\ell+1}$, and so
$\ord{p^{\ell+1}}{s} \mid p \cdot \ord{p^{\ell}}{s}$.  Thus, \ba
\lambda(\ell,p,s) \mid p. \ea

For any positive integer $m$ and prime $p$, we define the valuation
$\nu_p: \N^+ \to \N$ by $\nu_p(m) = j$ where $m$ can be factored as
$m=p^jn$ such that $n$ is not divisible by $p$.  Define $\delta_\ell
=\nu_{p}\left(s^{\ord{p^{\ell}}{s}}-1 \right) - \ell$. By
definition, $p^\ell \mid s^{\ord{p^{\ell}}{s}}-1 $, and so
$\delta_\ell \ge 0$. Note that \begin{eqnarray*} \label{plplus1}
s^{\ord{p^{\ell+1}}{s}}-1 &  = & s^{\lambda(\ell,p,s)
\ord{p^\ell}{s}}-1
 = (s^{\ord{p^\ell}{s}}-1) \left(
\sum_{i=0}^{\lambda(\ell,p,s)-1} s^{i \cdot \ord{p^\ell}{s}}
\right),\end{eqnarray*} and so \begin{eqnarray*} \delta_{\ell} -
\delta_{\ell+1}
& = & 1 + \nu_{p}(s^{\ord{p^\ell}{s}}-1) -   \nu_{p}\left(s^{\ord{p^{\ell+1}}{s}}-1 \right) \\
 & = & \label{1minusnu}
1 - \nu_p \left( \sum_{i=0}^{\lambda(\ell,p,s)-1} s^{i \cdot
\ord{p^\ell}{s}} \right) . \end{eqnarray*}  Since
$s^{\ord{p^\ell}{s}} \equiv 1 \mod p^\ell,$ \ba \label{equival}
\sum_{i=0}^{\lambda(\ell,p,s)-1} s^{i \cdot \ord{p^\ell}{s}} \equiv
\lambda(\ell,p,s) \mod p^\ell.\ea and since $\lambda(\ell,p,s) \mid
p$, the summation $\sum_{i=0}^{\lambda(\ell,p,s)-1} s^{i \cdot
\ord{p^\ell}{s}}$ is not divisible by $p^2$ whenever $\ell > 1$, in
which case
$$ \label{boundvp} \nu_p\left(\sum_{i=0}^{\lambda(\ell,p,s)-1} s^{i \cdot \ord{p^\ell}{s}}\right) \le 1.$$
Thus,
$\delta_\ell \ge \delta_{\ell +1}$ for $\ell >1$,  and so the
sequence $\{ \delta_i\}$ must stabilize; that is, $\delta_\ell =
\delta_{\ell+1}$ for $\ell \gg 0$, and so \begin{eqnarray*} \nu_p
\left( \sum_{i=0}^{\lambda(\ell,p,s)-1} s^{i \cdot \ord{p^\ell}{s}}
\right) = 1. \end{eqnarray*} Combining this with (\ref{equival}), it
follows that $\lambda(\ell,p,s) = p$.
\end{proof}

In light of this result, it behooves us to define the stabilization
point.
\begin{definition}
Let $s$ be a positive integer not divisible by the prime $p$. Define
$\alpha_{s,p}$ to be the smallest integer such that for any $\ell
\ge \alpha_{s,p}$, $\lambda(\ell,p,s) = p$.
\end{definition}

In the cases of interest to us, the prime $p$ divides $g$ and hence
$s-1$.  In this case, we can say more about $\lambda(\ell,p,s)$.  In
particular, if $p=2$ and $\nu_p(\ord{p^\ell}{s})>1$, then
we shall see $\lambda(\ell,p,s)=2$. Similarly, if $p>2$ is prime, then
$\lambda(\ell,p,s)=p$ if $\nu_p(\ord{p^\ell}{s})\ge1$. That is, once
the $p$ part of the $p^\ell$-order of $s$ is $p$ (or $4$ if $p=2$),
then the order increases by a factor of $p$ each time the power of
the modulus increases by $1$.  Consequently, for primes greater than
$2$, if we know $\alpha_{s,p}$, we can easily determine
$\nu_{p^\ell}(s)$ for all $\ell$.  We prove this in the remainder of
the section.

\begin{lemma}
If $\lambda(\ell,p,s)=p$ for some $\ell\ge2$ then $\lambda(m,p,s)=p$
for all $m\ge\ell$.
\end{lemma}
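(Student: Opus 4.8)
The plan is to reduce the statement to a monotonicity fact about the valuations $\delta_\ell$ that was already extracted inside the proof of Proposition~\ref{lambdajump}. First I would record the dictionary between $\lambda(\ell,p,s)$ and $\delta_\ell = \nu_p(s^{\ord{p^\ell}{s}}-1) - \ell$. Since $\ord{p^\ell}{s} \mid \ord{p^{\ell+1}}{s}$ always holds and the quotient divides $p$ by part (a) of Proposition~\ref{lambdajump}, we have $\lambda(\ell,p,s) \in \{1,p\}$. Moreover $\lambda(\ell,p,s) = 1$ holds exactly when $s^{\ord{p^\ell}{s}} \equiv 1 \bmod p^{\ell+1}$, i.e. when $\delta_\ell \ge 1$, while $\lambda(\ell,p,s) = p$ holds exactly when $s^{\ord{p^\ell}{s}} \not\equiv 1 \bmod p^{\ell+1}$, i.e. when $\delta_\ell = 0$. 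Thus the lemma is equivalent to the assertion that $\delta_\ell = 0$ for some $\ell \ge 2$ forces $\delta_m = 0$ for all $m \ge \ell$.

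Second, I would invoke the two properties of $\{\delta_i\}$ established within the proof of Proposition~\ref{lambdajump}: that $\delta_\ell \ge 0$ for every $\ell$ (because $p^\ell \mid s^{\ord{p^\ell}{s}}-1$), and that the sequence is non-increasing past the first index, namely $\delta_\ell \ge \delta_{\ell+1}$ for $\ell > 1$. With these in hand the conclusion is immediate: if $\delta_\ell = 0$ with $\ell \ge 2$, then for every $m \ge \ell$ the chain $0 = \delta_\ell \ge \delta_{\ell+1} \ge \cdots \ge \delta_m \ge 0$ pins each term to zero. Translating back through the dictionary of the first step gives $\lambda(m,p,s) = p$ for all $m \ge \ell$, as required.

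The only genuine content lies in verifying the dictionary in the first step; once the equivalence $\lambda(\ell,p,s) = p \Leftrightarrow \delta_\ell = 0$ is in place, the monotonicity already proved does all the remaining work, so I do not expect a real obstacle here. If one preferred an argument not citing the monotonicity statement, one could instead induct directly using the identity $\delta_\ell - \delta_{\ell+1} = 1 - \nu_p\left(\sum_{i=0}^{\lambda(\ell,p,s)-1} s^{i\ord{p^\ell}{s}}\right)$ obtained in the proof of Proposition~\ref{lambdajump}. Indeed, when $\delta_\ell = 0$ we have $\lambda(\ell,p,s) = p$, so the sum is congruent to $p$ modulo $p^\ell$; writing it as $p(1 + p^{\ell-1}c)$, the factor $1 + p^{\ell-1}c$ is prime to $p$ precisely because $\ell - 1 \ge 1$, whence $\nu_p$ of the sum equals $1$ and $\delta_{\ell+1} = \delta_\ell = 0$. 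The hypothesis $\ell \ge 2$ is exactly what forces this valuation to be $1$ rather than larger, and the induction then propagates to all $m \ge \ell$.
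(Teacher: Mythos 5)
Your proposal is correct, and it reaches the conclusion by a recognizably different route than the paper. The paper proves the lemma from scratch: it sets $t=\nu_p(\ord{p^\ell}{s})$, records the three congruences encoding $\lambda(\ell,p,s)=p$, factors $s^{xp^{t+1}}-1=\bigl(s^{xp^t}-1\bigr)\bigl(1+s^{xp^t}+\cdots+s^{(p-1)xp^t}\bigr)$, computes $\nu_p\bigl(s^{xp^{t+1}}-1\bigr)=\ell+1$ directly, and inducts. You instead observe that the lemma is already implicit in the bookkeeping inside the proof of Proposition~\ref{lambdajump}: with $\delta_\ell=\nu_p\bigl(s^{\ord{p^\ell}{s}}-1\bigr)-\ell$, part (a) gives $\lambda(\ell,p,s)\in\{1,p\}$ (here primality of $p$ is used), and your dictionary $\lambda(\ell,p,s)=p\Leftrightarrow\delta_\ell=0$ is verified correctly, since $\lambda(\ell,p,s)=1$ iff $s^{\ord{p^\ell}{s}}\equiv1\bmod p^{\ell+1}$ iff $\delta_\ell\ge1$; then $\delta_\ell\ge0$ together with the monotonicity $\delta_{\ell+1}\le\delta_\ell$ for $\ell>1$, both established in that earlier proof, makes the value $0$ absorbing from index $2$ onward, which is exactly the lemma. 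Your chain invokes monotonicity only at indices $\ge2$, so the hypothesis $\ell\ge2$ is used exactly where it is needed, and there is no gap. What your route buys is economy and transparency: the lemma becomes a two-line corollary of material already proved, rather than a repetition of the valuation computation. What the paper's route buys is self-containment and the explicit exact-jump fact $\nu_p\bigl(s^{\ord{p^{\ell+1}}{s}}-1\bigr)=\ell+1$, stated in the same concrete style as the binomial-theorem computation of $\alpha_{s,p}$ that follows it. Your fallback induction via the identity $\delta_\ell-\delta_{\ell+1}=1-\nu_p\bigl(\sum_{i=0}^{\lambda(\ell,p,s)-1}s^{i\cdot\ord{p^\ell}{s}}\bigr)$, with the factorization of the sum as $p(1+p^{\ell-1}c)$, is in $\delta$-notation essentially the paper's own argument, so you have in effect given both proofs and shown how they are related.
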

\begin{proof}
Let $t=\nu_p(\ord{p^\ell}{s})$.  Then we can write $\ord{p^\ell}{s}$
as  $xp^t$ (where $\gcd(x,p)=1$).  We now have
\begin{eqnarray}
s^{xp^t}&\equiv& 1\mod p^\ell \label{eq:cong}, \\
s^{xp^t} & \not\equiv  & 1 \mod p^{\ell+1}. \quad\mbox{and}
\label{eq:notcong}\\
s^{xp^{t+1}} & \equiv& 1 \mod p^{\ell+1}, \label{eqcongellplus1}
\end{eqnarray}
with the last two coming from our assumption that
$\lambda(\ell,p,s)=p$.

 Equations~(\ref{eq:cong}) and~(\ref{eq:notcong}) imply
$s^{xp^t}-1\equiv yp^\ell \mod p^{\ell+1}$ for some $y$ relatively
prime to $p$.  We then have
$$
s^{xp^{t+1}}-1=\left(s^{xp^t}-1\right)\left(1+s^{xp^t}+s^{2xp^t}+\cdots+s^{(p-1)xp^t}\right).
$$
Therefore
\begin{eqnarray*}
\nu_p\left(s^{xp^{t+1}}-1\right) & = & \nu_p\left(s^{xp^t}-1\right)
+ \nu_p\left(1+s^{xp^t}+s^{2xp^t}+\cdots+s^{(p-1)xp^t}\right) \\
& = & \ell +
\nu_p\left(1+s^{xp^t}+(s^{xp^t})^2+\cdots+(s^{xp^t})^{p-1}\right) \\
& = & \ell+1.
\end{eqnarray*}
The second equality holds by equations~(\ref{eq:cong})
and~(\ref{eq:notcong}).  The last equality holds
because~(\ref{eq:cong}) implies each of the $p$ terms is congruent
to $1$ modulo $p^2$ (since $\ell\ge2$).  Hence $p^{\ell+2}$ does not
divide $s^{xp^{t+1}}-1$ and Proposition~\ref{lambdajump} implies
$\lambda(\ell+1,p,s)=p$.  By induction $\lambda(m,p,s)=p$ for all
$m\ge\ell$.
\end{proof}

In the above proof, the requirement that $\ell\ge 2$ was only used
in arguing that
$\nu_p\left(1+s^{xp^{t}}+\dots+s^{(p-1)xp^{t}}\right)=1$.  If
$\ell=1$, we have only that this term is congruent to $p$ modulo
$p$, and thus might be $0$ modulo $p^2$ or $p^3$, etc.  On the other
hand, if we knew further that $s=1+py$ for some integer $y$, by the
binomial theorem
$$
s^p-1=(1+py)^p-1=\sum_{k=1}^p {p\choose k}(py)^k.
$$
Since $p\choose k$ is divisibly by $p$ for $1\le k<p$, if the prime
$p$ is greater than $2$, $\nu_p\left({p\choose k}
(py)^k\right)>2+\nu_p(y)$ when $2\le k\le p$. Since
$\nu_p\left({p\choose 1}(py)\right)=2+\nu_p(y)$, it follows that
$\nu_p(s^p-1)=2+\nu_p(y)$.  However, $\nu_p(s-1)=1+\nu_p(y)$. As a
result, for $p>2$ and $\ell=1+\nu_p(y)$, we have
$\lambda(\ell+1,p,s)=\lambda(\ell,p,s)=p$.  However, this implies
that $\nu_p(\ord{p^{\ell+1}}{s})=2$, and thus
$\alpha_{s,p}=\ell=\nu_p(s-1)$.  A straightforward argument now
shows the following result:
\begin{prop}
Let $s$ be a positive integer and $p$ be a prime divisor of $s-1$.
Then
\begin{enumerate}
\item If $p>2$, then
$$
\nu_p(\ord{p^\ell}{s})=\left\{\begin{array}{cl} 0 &
\ell \le \nu_p(s-1) \\
\ell-\nu_p(s-1) & \ell> \nu_p(s-1). \end{array}\right.
$$
\item If $p=2$, then
$$
\nu_2(\ord{2^\ell}{s})=\left\{\begin{array}{cl}
   0  & \ell\le \nu_2(s-1) \\
   1  & \nu_2(s-1) < \ell\le \nu_2(s^2-1) \\
   1+\ell-\nu_2(s^2-1) & \ell> \nu_2(s^2-1).  \end{array}\right.
$$
\end{enumerate}
\end{prop}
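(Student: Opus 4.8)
The plan is to convert the statement about $\nu_p(\ord{p^\ell}{s})$ into a counting problem over the ``jump'' indices $j$ at which $\lambda(j,p,s)=p$ rather than $1$. First I would observe that since $p \mid s-1$ we have $s\equiv 1 \bmod p$, so $\ord{p}{s}=1$; and since $\ord{p^{j+1}}{s}=\lambda(j,p,s)\,\ord{p^{j}}{s}$ with $\lambda(j,p,s)\mid p$ by Proposition~\ref{lambdajump}(a), an immediate induction shows that each $\ord{p^\ell}{s}$ is a power of $p$. Consequently
$$
\nu_p(\ord{p^\ell}{s}) = \#\{\,j : 1 \le j \le \ell-1,\ \lambda(j,p,s)=p\,\},
$$
and the whole problem reduces to locating precisely the indices $j$ where $\lambda(j,p,s)=p$.

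For part (1) ($p>2$) the jumps are easy to pin down. For $j<\nu_p(s-1)$ we have $p^{j+1}\mid s-1$, hence $s\equiv 1 \bmod p^{j+1}$, so both $\ord{p^j}{s}$ and $\ord{p^{j+1}}{s}$ equal $1$ and $\lambda(j,p,s)=1$. The discussion preceding this Proposition established that $\alpha_{s,p}=\nu_p(s-1)$, i.e.\ that $\lambda(j,p,s)=p$ for every $j\ge \nu_p(s-1)$. Counting the jump indices in $\{1,\dots,\ell-1\}$ then gives $0$ when $\ell\le \nu_p(s-1)$ and $\ell-\nu_p(s-1)$ when $\ell>\nu_p(s-1)$, which is exactly the claimed formula.

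For part (2) ($p=2$) the same reduction applies, but the jump pattern now has two stages, and this is the step I expect to be the main obstacle. Writing $c_1=\nu_2(s-1)$ and $c_2=\nu_2(s^2-1)=c_1+\nu_2(s+1)$, I would show: (i) $\lambda(j,2,s)=1$ for $j<c_1$, exactly as above; (ii) $\lambda(c_1,2,s)=2$, since $s\equiv 1 \bmod 2^{c_1}$ while $s\not\equiv 1 \bmod 2^{c_1+1}$ forces $\ord{2^{c_1+1}}{s}=2$; (iii) $\lambda(j,2,s)=1$ for $c_1<j<c_2$, because there $2^{j+1}\mid s^2-1$ gives $\ord{2^{j+1}}{s}\mid 2$, while $j+1>c_1$ keeps the order from dropping to $1$, so it stalls at $2$ throughout this range; and (iv) $\lambda(j,2,s)=2$ for all $j\ge c_2$, which follows because $2^{c_2}$ exactly divides $s^2-1$, producing a genuine jump at the index $c_2=c_1+\nu_2(s+1)\ge 2$, after which the lemma asserting that $\lambda(\cdot,2,s)=2$ persists once it occurs at an index $\ge 2$ propagates it indefinitely. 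The delicate point is precisely the stall (iii): when $s\equiv 3 \bmod 4$ the first jump sits at $j=c_1=1$, where the persistence lemma does not apply, and the extra $2$-adic content $\nu_2(s+1)\ge 2$ of $s+1$ must be absorbed before the order resumes doubling at $j=c_2$. Finally, counting the jump indices in $\{1,\dots,\ell-1\}$ — none for $\ell\le c_1$, the single index $c_1$ for $c_1<\ell\le c_2$, and $c_1$ together with $c_2,\dots,\ell-1$ for $\ell>c_2$ — yields the three values $0$, $1$, and $1+\ell-\nu_2(s^2-1)$, respectively.
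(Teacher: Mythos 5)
Your proposal is correct and takes essentially the same route as the paper: the paper dispatches this proposition as ``a straightforward argument'' from Proposition~\ref{lambdajump}, the persistence lemma (that $\lambda(\ell,p,s)=p$ at some $\ell\ge 2$ forces $\lambda(m,p,s)=p$ for all $m\ge\ell$), and the computation $\alpha_{s,p}=\nu_p(s-1)$ for $p>2$, and your jump-counting formulation $\nu_p(\ord{p^\ell}{s})=\#\{j\le\ell-1:\lambda(j,p,s)=p\}$ is exactly that argument made explicit. In particular, your handling of the delicate $p=2$ stall on $\nu_2(s-1)<j<\nu_2(s^2-1)$, with the persistence lemma triggered only at $c_2=\nu_2(s^2-1)\ge2$, is precisely the intended use of the paper's lemma.
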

\smallskip

Now, suppose that $g=p_1^{j_1}\dots p_n^{j_n}$ with each $p_i$ prime
(and $j_i\ge1$) is defined as in
Proposition~\ref{prop:ordreduction}. For a given $k$ with
$\gcd(k,gs)=1$, we now determine for which values of $t_1,\dots,t_n$
we have that $p_1^{t_1}\dots p_n^{t_n} k$ are $(s,t)$-distinguished.

\begin{prop}\label{distbounds}
Suppose $s>t>0$ are integers, and let $g$ be defined as above with
$g=p_1^{j_1}\dots p_n^{j_n}$ the prime factorization of $g$. Then
$p_1^{t_1}\dots p_n^{t_n}k$ is $(s,t)$-distinguished if and only if
for each $i=1,\dots,n$, the power $t_i$ satisfies one of the
following:
\begin{enumerate}
\item $j_i+t_i\le\nu_{p_i}(s-1)$,
\item $p_i=2$ and $\nu_2(s-1) < t_i<t_i+j_i \le \nu_2(s^2-1)$, or
\item $\ord{p_i^{t_i+j_i}}{s}$ divides $\ord{k}{s}$.
\end{enumerate}
\end{prop}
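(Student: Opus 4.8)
The plan is to derive Proposition~\ref{distbounds} directly from Lemma~\ref{primedivide} by replacing its order-theoretic alternatives with the explicit arithmetic conditions (1)--(3). Writing $r = p_1^{t_1}\cdots p_n^{t_n} k$ with $\gcd(gs,k)=1$, Lemma~\ref{primedivide} tells us that $r$ is $(s,t)$-distinguished if and only if, for each $i$, either $\ord{p_i^{t_i+j_i}}{s} = \ord{p_i^{t_i}}{s}$ or $\ord{p_i^{t_i+j_i}}{s} \mid \ord{k}{s}$. The second alternative is verbatim condition (3), so the whole content of the proposition is the claim that the first alternative, $\ord{p_i^{t_i+j_i}}{s} = \ord{p_i^{t_i}}{s}$, is equivalent to ``(1) or (2).''

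First I would reduce this equality of orders to an equality of $p_i$-adic valuations. Since $p_i \mid g \mid s-1$ we have $s \equiv 1 \pmod{p_i}$, hence $\ord{p_i}{s} = 1$; combined with Proposition~\ref{lambdajump}(a), which forces every ratio $\lambda(\ell,p_i,s)$ into $\{1,p_i\}$, an easy induction shows that $\ord{p_i^\ell}{s}$ is a pure power of $p_i$, namely $\ord{p_i^\ell}{s} = p_i^{\nu_{p_i}(\ord{p_i^\ell}{s})}$ for every $\ell \ge 0$. Consequently $\ord{p_i^{t_i+j_i}}{s} = \ord{p_i^{t_i}}{s}$ holds if and only if $\nu_{p_i}(\ord{p_i^{t_i+j_i}}{s}) = \nu_{p_i}(\ord{p_i^{t_i}}{s})$.

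Next I would feed in the closed-form expressions for $\nu_{p_i}(\ord{p_i^\ell}{s})$ established in the preceding proposition and exploit that these are nondecreasing functions of $\ell$: since $j_i \ge 1$, equality of the valuations at $t_i$ and $t_i+j_i$ is equivalent to the valuation being constant on all of $[t_i, t_i+j_i]$, i.e. to that interval lying inside a single flat block of the piecewise formula. For odd $p_i$ the valuation equals $0$ for $\ell \le \nu_{p_i}(s-1)$ and increases strictly thereafter, so the only flat block of positive length is $[0,\nu_{p_i}(s-1)]$, yielding exactly condition (1) and no analogue of (2) (consistent with (2) requiring $p_i=2$). For $p_i = 2$ the formula has two flat blocks, $[0,\nu_2(s-1)]$ and $[\nu_2(s-1)+1,\nu_2(s^2-1)]$, separated by a jump at $\ell = \nu_2(s-1)$ and followed by strict growth beyond $\nu_2(s^2-1)$; containment of $[t_i,t_i+j_i]$ in the first block is condition (1), while containment in the second (which, as $j_i\ge1$ forces $t_i<t_i+j_i$, reads $\nu_2(s-1)<t_i<t_i+j_i\le \nu_2(s^2-1)$) is precisely condition (2). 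Reassembling the per-$i$ equivalences inside Lemma~\ref{primedivide} then gives the stated result.

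I expect the main obstacle to be the $p_i=2$ analysis, specifically the non-monotone behaviour of $\lambda(\cdot,2,s)$: the order can jump at $\ell = \nu_2(s-1)$ and then stall again before resuming its eventual growth at $\ell = \nu_2(s^2-1)$, which happens exactly when $s \equiv 3 \pmod 4$ so that $\nu_2(s-1)=1$ and the early jump sits at $\ell=1$, outside the reach of the ``once it jumps it keeps jumping'' lemma. This intermediate flat block is the sole source of alternative (2), and care is needed at its endpoints to confirm that an interval straddling $\ell=\nu_2(s-1)$ or reaching past $\ell = \nu_2(s^2-1)$ genuinely breaks the valuation equality, so that (1) and (2) are not merely sufficient but exhaustive.
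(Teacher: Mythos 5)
Your proposal is correct and follows essentially the same route as the paper, whose entire proof is the one-line observation that since $\ord{p_i}{s}=1$ the result follows from Lemma~\ref{primedivide} together with the closed-form valuation formulas of the preceding proposition. You have simply spelled out the details the paper leaves implicit: that $\ord{p_i^\ell}{s}$ is a pure power of $p_i$, so equality of orders reduces to equality of valuations, whose flat blocks are exactly conditions (1) and (2).
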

\begin{proof}
Since $\ord{p}{s}=1$, the above follows from
Lemma~\ref{primedivide}.
\end{proof}

To demonstrate Proposition \ref{distbounds} in practice, we consider
the case where $(s,t) = (11,1)$. In the following three matrices,
the $(i,j)$-entry represents the whether or not $2^i5^jk$ is
distinguished, where $k=51, 101,$ and $151$, respectively.  Here
$g=p_1^{j_1} \cdot p_2^{j_2}$  where $p_1 = 2$, $p_2 = 5$, and $j_1
= j_2 = 1$.   We note that $\nu_2(s^2-1) = \nu_2(120) = 3$,
$\nu_2(s-1) = \nu_2(10) = 1$
 and
$\nu_5(s-1) = \nu_5(10) = 1$, while $\ord{51}{11} = 2^4$,
$\ord{101}{11} = 2^25^2$, and $\ord{151}{11} = 3\cdot5^2$. Computing
we obtain the following three charts, where a `y' denotes that
$2^i5^jk$ is $(11,1)$-distinguished and an `n' denotes that it is
not.

\vskip 0.3cm

{ \begin{center}
\begin{tabular}{cc}
\begin{tabular}{|c||c|c|c|c|c|c|c|} \hline
i$\backslash$j & 0 & 1 & 2 & 3 & 4 & 5 & 6 \\ \hline \hline 0   &
\multicolumn{1}{>{\columncolor[rgb]{0.7,0.7,0.7}}c|}{y} & n & n & n
& n & n & n \\ \hline 1   &
\multicolumn{1}{>{\columncolor[rgb]{0.7,0.7,0.7}}c|}{y} & n & n & n
& n & n & n
\\ \hline 2   &  \multicolumn{1}{>{\columncolor[rgb]{0.7,0.7,0.7}}c|}{y} & n & n & n & n & n & n \\
\hline 3 & \multicolumn{1}{>{\columncolor[rgb]{0.7,0.7,0.7}}c|}{y} & n & n & n & n & n & n \\ \hline 4   &  \multicolumn{1}{>{\columncolor[rgb]{0.7,0.7,0.7}}c|}{y} & n & n & n &
n & n & n \\ \hline 5 &
\multicolumn{1}{>{\columncolor[rgb]{0.7,0.7,0.7}}c|}{y} & n & n & n
& n & n & n
\\ \hline 6 &  n & n & n & n & n & n & n \\ \hline
\end{tabular}
&
\begin{tabular}{|c||c|c|c|c|c|c|c|} \hline
i$\backslash$j & 0 & 1 & 2 & 3 & 4 & 5 & 6 \\ \hline  \hline0   &
\multicolumn{1}{>{\columncolor[rgb]{0.7,0.7,0.7}}c|}{y} &
\multicolumn{1}{>{\columncolor[rgb]{0.7,0.7,0.7}}c|}{y} &
\multicolumn{1}{>{\columncolor[rgb]{0.7,0.7,0.7}}c|}{y}  & n & n & n
& n \\ \hline  1   &
\multicolumn{1}{>{\columncolor[rgb]{0.7,0.7,0.7}}c|}{y} &
\multicolumn{1}{>{\columncolor[rgb]{0.7,0.7,0.7}}c|}{y} &
\multicolumn{1}{>{\columncolor[rgb]{0.7,0.7,0.7}}c|}{y} & n & n
& n & n \\
\hline 2   &
\multicolumn{1}{>{\columncolor[rgb]{0.7,0.7,0.7}}c|}{y} &
\multicolumn{1}{>{\columncolor[rgb]{0.7,0.7,0.7}}c|}{y} &
\multicolumn{1}{>{\columncolor[rgb]{0.7,0.7,0.7}}c|}{y}  & n & n & n & n \\
\hline 3   & \multicolumn{1}{>{\columncolor[rgb]{0.7,0.7,0.7}}c|}{y}
& \multicolumn{1}{>{\columncolor[rgb]{0.7,0.7,0.7}}c|}{y} &
\multicolumn{1}{>{\columncolor[rgb]{0.7,0.7,0.7}}c|}{y} & n & n & n
& n \\ \hline 4   & n & n & n & n & n & n & n \\ \hline 5   &  n & n
& n & n &
n & n & n \\ \hline 6   &  n & n & n & n & n & n & n \\
\hline
\end{tabular} \vspace{4pt} \\
$k=51$ & $k=101$ \\
\end{tabular}

\vskip 0.3 cm

\begin{tabular}{|c||c|c|c|c|c|c|c|} \hline
i$\backslash$j & 0 & 1 & 2 & 3 & 4 & 5 & 6 \\ \hline \hline 0   &
\multicolumn{1}{>{\columncolor[rgb]{0.7,0.7,0.7}}c|}{y}&
\multicolumn{1}{>{\columncolor[rgb]{0.7,0.7,0.7}}c|}{y} &
\multicolumn{1}{>{\columncolor[rgb]{0.7,0.7,0.7}}c|}{y}& n & n & n &
n \\ \hline  1   &  n & n & n & n & n & n & n \\ \hline 2   &
\multicolumn{1}{>{\columncolor[rgb]{0.7,0.7,0.7}}c|}{y}&
\multicolumn{1}{>{\columncolor[rgb]{0.7,0.7,0.7}}c|}{y} &
\multicolumn{1}{>{\columncolor[rgb]{0.7,0.7,0.7}}c|}{y}& n & n & n & n \\
\hline 3   &  n & n & n & n & n & n & n \\ \hline 4   &  n & n & n &
n & n & n & n \\ \hline 5   &  n & n & n & n & n
& n & n \\ \hline 6   &  n & n & n & n & n & n & n \\
\hline
\end{tabular}
\vspace{4pt} \\

$k=151$

\end{center}
}

In fact, since either $\nu_2(s-1) =1$ or $\nu_2(s+1) = 1$, an easy
argument shows that such a chart will always have at most one ``gap"
from a shaded rectangle.

\section{Another Spanning Set for $\phi_{s,t}$} \label{spanset} \setcounter{equation}{0}

As seen in Theorem \ref{edbasis}, the functions of the form
$\psi_{s,t,r,n}$ together with $1/(1-x)$ span the vector space of
rational functions fixed by $\phi_{s,t}$.  In this section, we
generate another spanning set for this vector space.

Consider the map $\rho_{s,t}: \Z_r \to \Z_r$ given by $n \mapsto sn
+ t \mod r$.
Given $n, \, r \in \mathbb{N}$ with  $r \ge 1$ such that $r$ and $s$
are relatively prime, define \ba F_{s,t,r,n} & = & \{
\rho_{s,t}^{(i)}(n) \text{ mod } \, r: \; i \in \mathbb{Z} \}, \ea
where $\rho_{s,t}^{(i)}$ represents the $i$-th iterate of the
function $\rho_{s,t}$.

Define $\Upsilon_{s,t,r}$ to be a complete collection of coset
representatives (all chosen to be less than $r$); i.e., for all $n
\in \N$, there exists a unique $n' \in \Upsilon_{s,t,r}$ such that
$F_{s,t,r,n} = F_{s,t,r,n'}$.  For $r \ge 1$, define \ba
\curt_{s,t,r,n}(x)  = \frac{1}{1-x^r}
 \sum_{j\in F_{s,t,r,n}}
x^j . \ea

For example, consider the case when $s=3, t=1, r=13$.  Then we have
the following cosets: $F_{3,1,13,0} = \{0,1,4\}, F_{3,1,13,2} =
\{2,7,9\}, F_{3,1,13,5} = \{3,10,5\}, F_{3,1,13,6} = \{6\},$ and
$F_{3,1,13,8} = \{8,12,11\}$.  This produces the following rational
functions:
\begin{eqnarray*}
\curt_{3,1,13,0} &  =  &  \frac{1}{1-x^{13}} \left(1 + x + x^4  \right)\\
\curt_{3,1,13,2} &  =  &  \frac{1}{1-x^{13}} \left( x^2 + x^7 + x^9 \right)\\
\curt_{3,1,13,3} &  =  &  \frac{1}{1-x^{13}} \left( x^3 + x^{10} + x^5 \right)\\
\curt_{3,1,13,6} &  =  &  \frac{1}{1-x^{13}} \left( x^6 \right)\\
\curt_{3,1,13,8} &  =  &  \frac{1}{1-x^{13}} \left(  x^7 + x^8 + x^{11} \right)\\
\end{eqnarray*}

It is clear by the definition of the map $\rho_{s,t}$ that each
rational function of the form $\curt_{s,t,r,n}$ is fixed by
$\phi_{s,t}$

\begin{theorem}
Suppose $s\ge 2$ and $1 \le t \le s-2$.  The collection of all
$\curt_{s,t,r,n}$ where $r$ is distinguished with respect to $(s,t)$
and $n \in \Upsilon_{s,t,r}$ spans  the set of all rational
functions that are fixed under the transformation $\phi_{s,t}$.
\end{theorem}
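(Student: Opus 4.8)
The plan is to invoke Theorem~\ref{edbasis}: the functions $\psi_{s,t,r,n}$ (with $r$ distinguished and $n\in\Lambda_{s,r}$ coprime to $r$) together with $1/(1-x)$ already form a basis of the fixed space, and every $\curt_{s,t,r,n}$ is known to be fixed by $\phi_{s,t}$. Hence it suffices to show that each of these basis elements lies in the span of the collection $\{\curt_{s,t,r,n}\}$. I will do this one modulus at a time: for a fixed $r$ coprime to $s$, I work inside the finite-dimensional space
\[
W_r=\left\{\frac{p(x)}{1-x^r}:\deg p<r\right\},
\]
which has as a basis the $r$ ``comb'' functions $c_\ell=x^\ell/(1-x^r)$ for $0\le\ell<r$; their Taylor coefficients are the indicator sequences of the residue classes modulo $r$, hence they are linearly independent.

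The first step is to compute the action of $\phi_{s,t}$ on the comb functions. Since $c_\ell=\sum_{m\ge0}\mathbb{1}[m\equiv\ell\bmod r]\,x^m$, applying $\phi_{s,t}$ replaces the coefficient of $x^m$ by $\mathbb{1}[sm+t\equiv\ell\bmod r]$, and because $\gcd(s,r)=1$ this equals $\mathbb{1}[m\equiv s^{-1}(\ell-t)\bmod r]$. Thus $\phi_{s,t}(c_\ell)=c_{\ell'}$ where $\ell'=s^{-1}(\ell-t)\bmod r$; that is, $\phi_{s,t}$ genuinely \emph{permutes} the comb functions, via the inverse of $\rho_{s,t}$. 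Consequently a combination $\sum_\ell a_\ell c_\ell$ is fixed by $\phi_{s,t}$ exactly when the sequence $(a_\ell)$ is constant on the orbits of $\rho_{s,t}$ on $\Z_r$. Since $\gcd(s,r)=1$ makes $\rho_{s,t}$ a bijection of $\Z_r$, those orbits are precisely the sets $F_{s,t,r,n}$, and the orbit-indicator combinations are exactly the functions $\curt_{s,t,r,n}$. Hence, for each $r$ coprime to $s$, the functions $\{\curt_{s,t,r,n}:n\in\Upsilon_{s,t,r}\}$ form a basis of the fixed subspace $W_r^{\phi}=\{f\in W_r:\phi_{s,t}(f)=f\}$.

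Next I would place each $\psi$ inside the appropriate $W_r$. For $r$ coprime to $s$ and any integer $a$, the geometric-series identity gives
\[
\frac{1}{1-\omega_r^a x}=\sum_{\ell=0}^{r-1}\omega_r^{a\ell}\,\frac{x^\ell}{1-x^r}=\sum_{\ell=0}^{r-1}\omega_r^{a\ell}c_\ell,
\]
so every simple-pole term whose pole is an $r$th root of unity lies in $W_r$. Because
\[
\psi_{s,t,r,n}(x)=\sum_{j=1}^{\ord{r}{s}}\frac{\omega_r^{n\beta_{s,t}(j)}}{1-\omega_r^{ns^j}x}
\]
is a finite combination of exactly such terms, we get $\psi_{s,t,r,n}\in W_r$; it is also fixed by $\phi_{s,t}$, since it appears in the basis of Theorem~\ref{edbasis}. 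Therefore $\psi_{s,t,r,n}\in W_r^{\phi}$, which by the previous step is spanned by $\{\curt_{s,t,r,m}:m\in\Upsilon_{s,t,r}\}$ with the \emph{same} modulus $r$ — and $r$ is distinguished, so these $\curt$'s belong to our indexing set. This exhibits each $\psi_{s,t,r,n}$ as a combination of admissible $\curt$'s. Finally, for the basis element $1/(1-x)$, I observe that $1/(1-x)=\sum_{\ell=0}^{r-1}c_\ell=\sum_{n\in\Upsilon_{s,t,r}}\curt_{s,t,r,n}$ for any distinguished $r\ge2$ (one exists, as $\Omega(s,t)$ is infinite), so it too lies in the span.

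Assembling these observations, every element of the basis from Theorem~\ref{edbasis} lies in the span of $\{\curt_{s,t,r,n}:r\in\Omega(s,t),\ n\in\Upsilon_{s,t,r}\}$; since these $\curt$'s are themselves fixed by $\phi_{s,t}$, they span exactly the fixed space. I expect the main obstacle to be the bookkeeping in the second paragraph: verifying carefully that $\phi_{s,t}$ permutes the comb functions (rather than merely mapping $W_r$ into itself) and that the fixed subspace is then spanned by the orbit sums. Everything else reduces to the standard change of basis between the root-of-unity family $\{1/(1-\omega_r^a x)\}$ and the comb family $\{c_\ell\}$ of $W_r$, effected by the invertible Vandermonde (discrete Fourier) matrix $(\omega_r^{a\ell})$.
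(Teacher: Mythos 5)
Your proposal is correct, but it reaches the theorem by a genuinely different route than the paper. The paper's proof is direct: it quotes from \cite{mosteig} (Proposition 3.2 there) that any rational function fixed by $\phi_{s,t}$ has numerator degree less than denominator degree, simple poles, and poles at roots of unity, hence has the form $p(x)/(1-x^r)$ with $\deg p < r$; it then makes your orbit-constancy observation directly on Taylor coefficients (the fixed-point condition $a_n = a_{sn+t}$ forces $a_{j_1}=a_{j_2}$ whenever $j_1,j_2 \in F_{s,t,r,n}$), concluding that the function is a linear combination of the $\curt_{s,t,r,n}$. You instead take Theorem~\ref{edbasis} as the starting point and express each basis element in the comb basis $c_\ell = x^\ell/(1-x^r)$ of $W_r$: the discrete-Fourier identity $1/(1-\omega_r^a x)=\sum_\ell \omega_r^{a\ell}c_\ell$ places each $\psi_{s,t,r,n}$ in $W_r$, and your verification that $\phi_{s,t}$ permutes the $c_\ell$ via the inverse of $\rho_{s,t}$ (which is sound, since $\gcd(s,r)=1$) identifies the fixed subspace of $W_r$ with the span of the orbit sums $\curt_{s,t,r,n}$ — the same kernel computation as the paper's, in linear-algebraic rather than coefficient-wise dress. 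Your detour buys two things worth noting. First, it keeps the modulus $r$ distinguished throughout, because the $\psi$-basis is indexed by distinguished $r$; the paper's proof, as written, expresses an arbitrary fixed function in terms of $\curt_{s,t,r,n}$ for whatever $r$ clears the denominators and never checks that this $r$ is distinguished, so your version actually closes that small bookkeeping gap (at the cost of invoking Theorem~\ref{edbasis} rather than the weaker pole-structure facts). Second, it yields the finer conclusion that each $\psi_{s,t,r,n}$ is a combination of $\curt_{s,t,r,m}$ with the \emph{same} modulus $r$, which is exactly the change-of-basis phenomenon the paper illustrates with the matrix $M$ in Section~\ref{spanset}. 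The paper's route is shorter and proves something slightly stronger in one stroke — any fixed function whose poles are $r$-th roots of unity lies in the span of the $\curt$'s for that single $r$ — but both arguments are essentially sound, with yours the more self-contained on the distinguished-$r$ point.
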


\begin{proof}
It is shown in \cite{mosteig} (see the proof of Proposition 3.2 in
that paper) that for any rational function fixed by $\phi_{s,t}$
\begin{enumerate}
\item[(i)] the degree of the numerator is less than the degree
of the denominator,
\item[(ii)] the poles must be simple, and
\item[(iii)] the poles must be roots of unity.
\end{enumerate}

Therefore, any rational function fixed by $\phi_{s,t}$ can be
expressed in the form $p(x)/(1-x^r)$ where $\deg p(x) < r$.

If $p(x)/(1-x^r) = \sum a_n x^n$ is fixed by $\phi_{s,t}$, then for
each $n\in \N$, $a_n = a_{sn+t}$.  Therefore, if $j_1, j_2 \in
F_{s,t,r,n}$, then $a_{j_1} = a_{j_2}$.  This means that
coefficients are constant over terms indexed by any given coset
$F_{s,t,r,n}$, and so $p(x)/(1-x^r)$ must be a linear combination of
rational functions of the form $\curt_{s,t,r,n}$.
\end{proof}

It appears that there is a great deal of redundancy in this spanning
set.   We note that for any choice of $s,t,r,N,a$, ${\mathcal
F}_{s,t,r,N}$ is a linear combination of functions of the form
${\mathcal F}_{s,t,ar,n}$, $n\in \N$.  In general, there appears to
be a correspondence between functions of the form ${\mathcal
F}_{s,t,r,n}$ and ${\mathcal F}_{s,t,ar,n'}$ for appropriate choices
of $n$ and $n'$.  It would be nice to see future investigations shed
light on the nature of this correspondence.

At this juncture, we pose the question of how to write these two
collections of rational functions relate to one another.  In
particular, we write each function of the form $\psi_{s,t,r,n}$ in
terms of functions of the form ${\mathcal F}_{s,t,r,n}$.   Consider
the example we began with $(s,r) = (3,13)$.
 Then we have
the following cosets: $C_{3,13,0} = \{0\}, C_{3,13,1} = \{1,3,9\},
C_{3,13,2} = \{2,6,5\}, C_{3,13,4} = \{4,12,10\},$ and $C_{3,13,7} =
\{7,8,11\}$.  This produces the following rational functions: {\tiny
\begin{eqnarray*}
\psi_{3,1,13,0} &  =  &  \frac{1}{1-x} \\
\psi_{3,1,13,1} &  =  &  \frac{ (e^{\frac{8\pi i}{13}}+1+e^{\frac{2\pi i}{13}}) + (e^{\frac{5\pi i}{13}}+e^{-\frac{7\pi i}{13}}+e^{\frac{7\pi i}{13}}+e^{-\frac{3\pi i}{13}}+e^{\frac{\pi i}{13}}+e^{-\frac{9\pi i}{13}})x + (e^{-\frac{4\pi i}{13}}+e^{-\frac{2\pi i}{13}}+e^{-\frac{10\pi i}{13}})x^2}{(1-e^{-\frac{8\pi i}{13}}x)(1-e^{\frac{2\pi i}{13}}x)(1-e^{\frac{6\pi i}{13}}x)}  \\
\psi_{3,1,13,2} &  =  &  \frac{(e^{-\frac{10\pi i}{13}}+
e^{\frac{4\pi i}{13}}+1)  +
(\lexp{}{7} + \lexp{-}{5} +\lexp{-}{3} +\lexp{-}{1} +\lexp{}{} + \lexp{-}{11} )x  +(\lexp{-}{8} + \lexp{}{6} + \lexp{-}{4})x^2  }{(1-e^{\frac{4\pi i}{13}}x)(1-e^{\frac{10\pi i}{13}}x)(1-e^{\frac{12\pi i}{13}}x)} \\
\psi_{3,1,13,4} &  =  &  \frac{(\lexp{}{6} + \lexp{}{8} + 1 )
+(\lexp{}{7} + \lexp{}{11} + \lexp{}{} + \lexp{-}{11} + \lexp{}{3} +
\lexp{-}{9} )x
+(\lexp{-}{8} + \lexp{}{10} + \lexp{}{12} )x^2  }{(1-e^{-\frac{2\pi i}{13}}x)(1-e^{-\frac{6\pi i}{13}}x)(1-e^{\frac{8\pi i}{13}}x)} \\
\psi_{3,1,13,7} &  =  &  \frac{(1 + \lexp{}{} + \lexp{}{4})  +
(\lexp{-}{11} + \lexp{}{3} + \lexp{}{5} + \lexp{}{7} + \lexp{}{9} +
\lexp{}{-3})x  +
(\lexp{-}{2} +\lexp{}{12} +\lexp{}{8} )x^2}{(1-e^{-\frac{4\pi i}{13}}x)(1-e^{-\frac{10\pi i}{13}}x)(1-e^{-\frac{12\pi i}{13}}x)} \\
\end{eqnarray*}}

Since the functions of the form ${\mathcal F}_{s,t,r,n}$ span the
collection of all fixed points of $\phi_{s,t}$ that correspond to
the distinguished number $r$, we can write each $\psi_{s,t,r,m}$ as
a linear combination of such functions; that is,
$$\psi_{s,t,r,n_i} = \sum \lambda_{ij} {\mathcal F}_{s,t,r,m_j}$$
for an appropriate choice of constants $\lambda_{ij} \in \C$, where
 $$n_1 = 0,\  n_2= 1, \ n_3= 2,\  n_4=4,\  n_5=7$$ and $$m_1=  0, \ m_2=2,\
 m_3=3,\
m_4=6,  \ m_5=8.$$
   Below
is a change of basis matrix $M$ whose  $(i,j)$-entry is
 $\lambda_{ij}$.
{\tiny $$\left[
\begin{array}{ccccc}
 1 & 1 & 1 & 1 & 1 \\
 1+e^{\tiny{\frac{2 i \pi }{13}}}+e^{\frac{8 i \pi }{13}} & e^{\frac{4 i \pi }{13}}+e^{-\frac{8 i \pi }{13}}+e^{-\frac{12 i \pi }{13}} & e^{-\frac{6 i \pi
}{13}}+e^{\frac{6 i \pi }{13}}+e^{\frac{10 i \pi }{13}} & 3
e^{\frac{12 i \pi }{13}} & e^{-\frac{2 i \pi }{13}}+e^{-\frac{4 i
\pi }{13}}+e^{-\frac{10
i \pi }{13}} \\
 1+e^{\frac{4 i \pi }{13}}+e^{-\frac{10 i \pi }{13}} & e^{\frac{2 i \pi }{13}}+e^{\frac{8 i \pi }{13}}+e^{\frac{10 i \pi }{13}} & e^{-\frac{6 i \pi
}{13}}+e^{-\frac{12 i \pi }{13}}+e^{\frac{12 i \pi }{13}} & 3
e^{-\frac{2 i \pi }{13}} & e^{-\frac{4 i \pi }{13}}+e^{\frac{6 i \pi
}{13}}+e^{-\frac{8
i \pi }{13}} \\
 1+e^{\frac{6 i \pi }{13}}+e^{\frac{8 i \pi }{13}} & e^{\frac{4 i \pi }{13}}+e^{-\frac{6 i \pi }{13}}+e^{-\frac{10 i \pi }{13}} & e^{-\frac{2 i \pi
}{13}}+e^{\frac{2 i \pi }{13}}+e^{-\frac{12 i \pi }{13}} & 3
e^{-\frac{4 i \pi }{13}} & e^{-\frac{8 i \pi }{13}}+e^{\frac{10 i
\pi }{13}}+e^{\frac{12
i \pi }{13}} \\
 1+e^{\frac{4 i \pi }{13}}+e^{-\frac{12 i \pi }{13}} & e^{\frac{2 i \pi }{13}}+e^{-\frac{4 i \pi }{13}}+e^{-\frac{6 i \pi }{13}} & e^{-\frac{8 i
\pi }{13}}+e^{-\frac{10 i \pi }{13}}+e^{\frac{10 i \pi }{13}} & 3
e^{\frac{6 i \pi }{13}} & e^{-\frac{2 i \pi }{13}}+e^{\frac{8 i \pi
}{13}}+e^{\frac{12 i \pi }{13}}
\end{array}
\right] $$ }

Note that the constant coefficients of the $\psi_{3,1,13,n}$ match
the first column of $M$, which is to be expected (and whose
justification is left to the reader). However, somewhat surprising
is the fact that the coefficients of $x^2$ in the numerators of each
$\psi_{3,1,13,n}$ appear as the entries of the last column of $M$.
We pose the question concerning whether such a correspondence holds
in general.

Note that each entry of this matrix is a sum of three or fewer
thirteenth roots of unity.  At this point, we represent the entries
of this matrix in a different fashion.  For each thirteenth root of
unity that appears, rewrite it in the form $e^{\frac{2ai\pi}{13}}$
where $0 \le a \le 12$.
  For example, $1+e^{\frac{4 i \pi }{13}}+e^{-\frac{12 i \pi }{13}}$
  can be written as $e^{\frac{0i\pi}{13}} +e^{\frac{4 i \pi }{13}}+e^{\frac{14 i \pi
  }{13}}$.  Then we note each integer $a$ such that
  $e^{\frac{2ai\pi}{13}}$ is a summand in the given expression.
  Continuing with our same example, we write $\{0,2,7\}$.
Applying this process to the entire matrix $M$, we obtain the
following matrix.

\begin{center}
$$M' = \left[
\begin{tabular}{ccccc}
\{0\} & \{0\} & \{0\} & \{0\} & \{0\} \\
 \{0,1,4\} & \{2,9,7\} &
\{10,3,5\} & \{6\} & \{12,11,8\}
\\ \{0,2,8\} & \{1,4,5\}& \{10,7,6\} & \{12\} & \{11,3,9\}
\\ \{0,3,4\} & \{2,10,8\} & \{12,1,7\} & \{11\} & \{9,5,6\}
\\ \{0,2,7\}  & \{1,11,10\} & \{9,8,5\} & \{3\} &  \{12,4,6\} \\
\end{tabular} \right]
$$\end{center}

It is interesting to note that the entire matrix $M'$ can be easily
obtained by scaling cosets of the form $F_{3,1,13,m}$. In particular
to obtain the $(i,j)$-entry of $M'$, consider multiplying the
entries of the coset $F_{3,1,13,m_j}$ by $n_i$ and then reduce
modulo 13. For example, consider the $(5,3)$ entry of $M'$, which is
$\{9,8,5\}$ according to the table above. We note that this entry
could have been obtained by multiplying $F_{3,1,13,m_3} =
F_{3,1,13,3} = \{3,10,5\}$ by $n_5=7$ and then reducing modulo 13.
An interesting open question is whether this sort of pattern holds
in general.

Let us consider what would have happened if we chose different coset
representatives other than  $n_1 = 0, n_2= 1, n_3= 2, n_4=4, n_5=7$.
For example, suppose we choose $n_5=8$.  Multiplying each
$F_{3,1,13,m_i}$ by $n_5=8$ and reducing modulo 13, we obtain the
following sets: $\{0, 6, 8 \}, \{4,7,3\}, \{11,2,1\},\{9\},
\{5,10,12\}$.  Note that if add 7 to each of the cosets and reduce
modulo 13 we obtain the last row of $M'$, which simply amounts to
multiplying the last row of $M$ by a scalar multiple.

The inverse matrix $M^{-1}$ also consists of entries that
are sums of roots of unity.  However, we have not found a similar type of pattern as $M$ has.  It would be interesting to investigate the inverse matrix more fully.

\bibliographystyle{amsalpha}

\end{document}